\author[P.~Leonetti]{Paolo Leonetti}
\address{Department of Statistics, Universit\`a Bocconi, via Roentgen 1, Milan 20136, Italy}
\email{leonetti.paolo@gmail.com}
\urladdr{\url{https://sites.google.com/site/leonettipaolo/}} 
\keywords{Tauberian theorem; ideal and statistical convergence; ordinary convergence; summability; regular matrices.}
\subjclass[2010]{Primary: 40A35, 40G15. Secondary: 54A20, 40A05.}
\title{Tauberian theorems for ordinary convergence}
\newcommand{\vertiii}[1]{{\left\vert\kern-0.25ex\left\vert\kern-0.25ex\left\vert #1 
    \right\vert\kern-0.25ex\right\vert\kern-0.25ex\right\vert}}
   \def\MR#1{}
\newtheorem{thm}{Theorem}[section]
\newtheorem{cor}[thm]{Corollary}%[section]
\newtheorem{lem}[thm]{Lemma}
\theoremstyle{definition} 
\let\olddefi\defi
\renewcommand{\defi}{\olddefi\normalfont}
\let\oldexample\example
\renewcommand{\example}{\oldexample\normalfont}
\newtheorem{rmk}[thm]{Remark}
\let\oldrmk\rmk
\renewcommand{\rmk}{\oldrmk\normalfont}
\providecommand{\MR}[1]{}
\providecommand{\MR}{\relax\ifhmode\unskip\space\fi MR }
\providecommand{\href}[2]{#2}
\begin{document}

\begin{abstract}
\noindent 
%We prove a Tauberian theorem for ordinary convergence: 
We show that 
a real sequence $x$ is convergent if and only if there exist a regular matrix $A$ and an $F_{\sigma\delta}$-ideal $\mathcal{I}$ on $\mathbf{N}$ such that the set of subsequences $y$ of $x$ for which $Ay$ is $\mathcal{I}$-convergent is of the second Baire category. 
This includes the cases where $\mathcal{I}$ is the ideal of asymptotic density zero sets, the ideal of Banach density zero sets, and the ideal of finite sets.
The latter recovers an old result given by Keogh and Petersen in [J. London Math. Soc.  \textbf{33} (1958), 121--123]. 
Our proofs are of a different nature and rely on recent results 
in the context of 
$\mathcal{I}$-Baire classes and filter games. 

As application, we obtain a stronger version of the classical Steinhaus' theorem: for each regular matrix $A$, there exists a $\{0,1\}$-valued sequence $x$ such that $Ax$ is not statistically convergent. 
\end{abstract}

\maketitle
\thispagestyle{empty}

\section{Introduction}

%An ideal on $\mathcal{I}$ is a family of subsets of the positive integers $\mathbf{N}$ closed under 

Given an infinite real matrix $A=(a_{n,k})$, we say that $A$ sums a real sequence $x$ if $Ax=(\sum_k a_{n,k}x_k)$ is well defined and convergent. 
%An infinite real matrix $A=(a_{n,k})$ 
The matrix $A$ is regular if it maps convergent sequences into convergent sequences, preserving the corresponding limits. 
%We say that a matrix $A$ sums a real sequence $x$ if $Ax=(\sum_k a_{n,k}x_k)$ is well defined and convergent. 

The classical Steinhaus' theorem states that a regular matrix $A$ cannot sum all $\{0,1\}$-valued sequences, see e.g. \cite{MR795251}. This can be rewritten equivalently as follows: a regular matrix $A$ cannot sum all subsequences of a divergent $\{0,1\}$-valued sequence $x$. We recall also that Hahn's theorem \cite[Theorem 2.4.5]{MR1817226} states that a matrix $A$ sums all $\{0,1\}$-valued sequences if and only if it sums all bounded sequences. On this direction, Buck \cite{MR9208, MR81983} showed that a regular matrix $A$ cannot sum all subsequences of a given divergent sequence $x$. 

Let $\Sigma$ be the set of strictly increasing functions $\sigma: \mathbf{N}\to \mathbf{N}$, so that a subsequence of a sequence $x$ is uniquely identified by some $\sigma \in \Sigma$, and write $\sigma(x):=(x_{\sigma(n)})$. Accordingly, Buck's result tells us that, if $A$ is regular and $x$ is divergent, then the set
\begin{equation}\label{eq:definitionsigmaxA}
\Sigma_{x,A}:=\{\sigma \in \Sigma: A\sigma(x) \in c\}
\end{equation}
cannot be equal to $\Sigma$. Finally, Keogh and Petersen \cite{MR92024} proved the following:
\begin{thm}\label{thm:keoghpetersen}
Let $x$ be a divergent sequence and $A$ be a regular matrix. Then $\Sigma_{x,A}$ is meager. 
%$\Sigma_{x,A}$ is meager whenever $x$ is divergent and $A$ is regular.
\end{thm}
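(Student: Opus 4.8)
The plan is to prove the stronger assertion that $\Sigma\setminus\Sigma_{x,A}$ is comeager in $\Sigma$. Here $\Sigma$ is a Polish space: it is closed in $\mathbf{N}^{\mathbf{N}}$, with the natural clopen basis consisting of the sets $N_s:=\{\sigma\in\Sigma:\sigma\text{ extends }s\}$ as $s$ ranges over finite strictly increasing sequences. By the Banach--Mazur game characterization of comeager sets (due to Oxtoby), comeagerness of $\Sigma\setminus\Sigma_{x,A}$ is equivalent to the existence of a winning strategy for the second player in the game where the two players alternately and properly extend a finite strictly increasing sequence, the run producing some $\sigma\in\Sigma$, and the second player wins exactly when $A\sigma(x)$ is not a convergent real sequence. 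So the task becomes to describe such a strategy.

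Before doing so I would isolate the elementary consequences of regularity (Silverman--Toeplitz): $M:=\sup_n\sum_k|a_{n,k}|<\infty$; for every fixed $L$ one has $\sum_{k<L}|a_{n,k}|\to 0$ and $\sum_k a_{n,k}\to 1$ as $n\to\infty$; for every fixed row, the $\ell^1$-tail $\sum_{k\geq L}|a_{n,k}|\to 0$ as $L\to\infty$; and for every $L$ there are infinitely many rows having a nonzero entry in some column $\geq L$ --- otherwise all but finitely many rows would be supported in a fixed finite block, forcing their row sums to tend to $0$ rather than to $1$. I also use that a real sequence diverges if and only if it is not Cauchy.

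The core of the argument is the bounded case. Suppose $|x_k|\leq B$ for all $k$; since $x$ is bounded and divergent it has two distinct cluster values $a<b$, and set $\varepsilon:=(b-a)/4$. The second player produces output indices $n_1<n_2<\cdots$ with $|(A\sigma(x))_{n_i}-c_i|<\varepsilon$, where $c_i:=a$ for $i$ odd and $c_i:=b$ for $i$ even; this forces $A\sigma(x)$ to be non-Cauchy, hence $\sigma\notin\Sigma_{x,A}$. At the $i$-th move, facing a current finite sequence $s$ of length $L$, the second player first picks $n=n_i>n_{i-1}$ so large that $B\sum_{k<L}|a_{n,k}|$ and $|a|\,\bigl|\sum_k a_{n,k}-1\bigr|$ are small; then picks $L'>L$ with $B\sum_{k\geq L'}|a_{n,k}|$ small (possible for this fixed row); then extends $s$ to length $L'$ by filling columns $L,\dots,L'-1$ with strictly increasing indices $j$, above everything used so far, for which $|x_j-c_i|$ is small --- infinitely many such $j$ exist since $c_i$ is a cluster value of $x$. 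For any later $\sigma$ extending this finite sequence, splitting $(A\sigma(x))_{n_i}$ into the sums over $k<L$, over $L\leq k<L'$, and over $k\geq L'$: the first is small because $n_i$ is large, the last is small because $x$ is bounded and the tail of row $n_i$ is small, and the middle equals $c_i\sum_{L\leq k<L'}a_{n_i,k}+\sum_{L\leq k<L'}a_{n_i,k}(x_{\sigma(k)}-c_i)$, whose first summand is close to $c_i$ (the row sum is near $1$ and its other two blocks are small) and whose second summand is at most $M$ times the small oscillation of the filled values. Hence $|(A\sigma(x))_{n_i}-c_i|<\varepsilon$ no matter how the game continues, and the strategy wins.

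For unbounded $x$ one has $\limsup_k|x_k|=\infty$, so arbitrarily large values of $x$ of a fixed sign are available, and I would split on the shape of $A$. If some row $n_0$ has infinite support, the second player forces the series $\sum_k a_{n_0,k}x_{\sigma(k)}$ to diverge: on its turns it reaches the next nonzero column of that row and plants there an $x$-value large enough to shift the partial sum by more than $1$ (alternating in sign if $x$ is also unbounded below), so $A\sigma(x)$ is not even well defined. If every row of $A$ has finite support, then each output coordinate is completely determined once $\sigma$ has been extended past the support of that row, and the second player alternates between locking a coordinate to be very large and --- when $x$ admits a bounded subsequence --- locking a coordinate near a finite cluster value as in the bounded case, so that $A\sigma(x)$ is unbounded. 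I expect the genuine obstacle to lie precisely in the remaining unbounded configurations --- notably an infinitely supported regular matrix against a sequence that is unbounded yet still summable in the ordinary sense --- where a later move of the opponent can perturb a prospective witness coordinate by an unbounded amount; obtaining a uniform argument there is, I believe, exactly why the paper proves the general $F_{\sigma\delta}$-ideal statement and deploys the machinery of $\mathcal{I}$-Baire classes and filter games instead of a bare-hands Banach--Mazur construction.
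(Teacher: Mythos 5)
Your argument is correct, and it is a genuinely different route from the one taken in the paper --- in fact it is much closer in spirit to the original Keogh--Petersen/Buck arguments, recast as a Banach--Mazur strategy. Your case split ($x$ bounded; $x$ unbounded and some row of $A$ infinitely supported; $x$ unbounded and $A$ row finite) is exactly the paper's decomposition, and in the two unbounded cases your strategies are essentially the paper's nowhere-dense constructions (Theorem \ref{lem:domaindefinition}, Corollary \ref{cor:unboundedcase1}, Theorem \ref{thm:unboundedrowfinite}) rephrased as game moves. The real divergence is in the bounded case: you win by planting, at each turn, a block of indices where $x$ is near one of its two cluster values, so that a locked output coordinate lands near $a$ or near $b$ alternately and $A\sigma(x)$ fails to be Cauchy. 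The paper instead argues softly: the limit functional $T(\sigma)=\lim_n\sum_k a_{n,k}x_{\sigma(k)}$ on $\Sigma_{x,A}$ is a pointwise limit of continuous functions, hence Baire one, hence continuous on a comeager subset of its domain; but regularity forces $T$ to be everywhere discontinuous, so $\Sigma_{x,A}$ is meager in itself and, being dense, meager in $\Sigma$. What the soft route buys is precisely the generalization to $\Sigma_{x,A}(\mathcal{I})$: your strategy produces only an \emph{infinite} set of witness rows $\{n_i\}$ on which $A\sigma(x)$ oscillates (or is large), and such a set may well lie in $\mathcal{I}$, so it cannot refute $\mathcal{I}$-convergence or $\mathcal{I}$-boundedness; the paper absorbs this difficulty into the statement $\mathcal{B}_1^{\mathcal{I}}=\mathcal{B}_1$ via Laflamme's game and the Filip\'{o}w--Szuca theorem (and, in the row-finite unbounded case, into Talagrand's interval characterization of meager ideals). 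What your route buys is a self-contained, elementary proof of Theorem \ref{thm:keoghpetersen} with no descriptive-set-theoretic input beyond the Banach--Mazur characterization of comeagerness.

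Two smaller remarks. First, your closing worry is unfounded for ordinary convergence: the three cases are exhaustive, and the configuration you fear (infinitely supported row against an unbounded $x$) is already covered by your second case, since there you are not trying to control the \emph{value} of a witness coordinate but only to destroy convergence of the defining series $\sum_k a_{n_0,k}x_{\sigma(k)}$, which each planted jump of size at least $1$ does irrevocably, whatever the opponent plays afterwards. The place where a bare-hands strategy genuinely breaks down is the bounded case for general $\mathcal{I}$, for the witness-set reason above, not any unbounded case. Second, your row-finite unbounded case is more complicated than it needs to be: there is no need to alternate with a cluster value or to assume a bounded subsequence exists. At move $i$ pick a row $n_i$ (larger than all previous ones) whose support reaches past the current length --- such rows exist by your own observation on row sums --- fill up to the last support column of that row, and at that last column plant an index $j$ with $|x_j|$ so large that $|(A\sigma(x))_{n_i}|\ge i$ regardless of the already-determined head of the row; this locks an unbounded, hence divergent, output sequence.
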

This is meaningful since $\Sigma$ is a $G_\delta$-subset of the Polish space $\mathbf{N}^{\mathbf{N}}$, hence it is a Polish space as well by Alexandrov's theorem; in particular, $\Sigma$ is not meager in itself. % cf. \cite[Chapter 2]{MR1619545}. %SRIVASTAVA
Special instances of Theorem \ref{thm:keoghpetersen} have been recently rediscovered in \cite{MR4049166, MR3809152, MR3216022}. Related results can be found in \cite[Theorem 2.3]{MR13435} and \cite{MR404918, MR425410, MR261220, MR1123336}.

Note that, if $x$ is convergent and $A$ is regular, then $\Sigma_{x,A}$ is equal to the whole $\Sigma$. Therefore, by Theorem \ref{thm:keoghpetersen}, $x$ is convergent if and only if there exists a regular matrix $A$ such that $\Sigma_{x,A}$ is not meager. 
The aim of this work is to show that Theorem \ref{thm:keoghpetersen} holds even if the space of convergent sequences $c$ in the definition of $\Sigma_{x,A}$ in \eqref{eq:definitionsigmaxA} is replaced by a much bigger set, e.g., the space of statistically convergent sequences (see Section \ref{sec:main}). 
%The techniques of our proofs are completely different than the ones used 
%%in the previous results. 
%by Keogh and Petersen.
Our methods are different from those employed by Keogh and Petersen, and rely on more recent result on filter games and ideal Baire classes. 
%allow us to obtain sharper versions of some of the known results

%provide a generalization of the latter result in the context of filter convergence.

\section{Main result}\label{sec:main}

Let $\mathcal{I}$ be an ideal on the positive integers $\mathbf{N}$, that is, a hereditary family of subsets of $\mathbf{N}$ closed under finite unions; 
moreover, it is assumed that $\mathcal{I}$ contains the family of finite sets $\mathrm{Fin}$ and it is different from $\mathcal{P}(\mathbf{N})$. 
Let 
%$\mathcal{I}^+:=\{S\subseteq \mathbf{N}: S\notin \mathcal{I}\}$ be collection of $\mathcal{I}$-positive sets and 
$\mathcal{I}^\star:=\{S\subseteq \mathbf{N}: S^c\in \mathcal{I}\}$ be its dual filter. 
Every subset of $\mathcal{P}(\mathbf{N})$ is endowed with the relative Cantor-space topology. 
In particular, we may speak about $F_\sigma$-subsets of $\mathcal{P}(\mathbf{N})$, meager ideals, etc. 
In addition, let $\mathcal{Z}$ be the ideal of asymptotic density zero sets, that is, 
$$
\textstyle 
\mathcal{Z}:=\left\{S\subseteq \mathbf{N}: \lim_n |S\cap [1,n]|/n=0\right\}.
$$
For our purposes, it will be important also to recall the definition of the ideal $\mathrm{Fin}\times \mathrm{Fin}$, which can be represented as an ideal on $\mathbf{N}$ by
$$
\textstyle 
\mathrm{Fin}\times \mathrm{Fin}:=\left\{S\subseteq \mathbf{N}: \forall^\infty k\in \mathbf{N}, \{n \in S: \nu_2(n)=k\}\in \mathrm{Fin}\right\},
$$
where $\nu_2(n)$ stands for the $2$-adic valuation of $n\in \mathbf{N}$ (that is, $\nu_2(n)=\max\{k\ge 0: 2^k \text{ divides }n\}$). 
We recall that $\mathcal{Z}$ is a $F_{\sigma\delta}$-ideal, while $\mathrm{Fin}\times \mathrm{Fin}$ is $F_{\sigma\delta\sigma}$. 
Given ideals $\mathcal{I}, \mathcal{J}$ on $\mathbf{N}$, we say that $\mathcal{I}$ contains an isomorphic copy of $\mathcal{J}$ if there exists an injection $\iota: \mathbf{N} \to \mathbf{N}$ such that $S \in \mathcal{J}$ if and only if $\iota^{-1}[S] \in \mathcal{I}$ for all $S\subseteq \mathbf{N}$; see e.g. \cite{MR1711328}. 
%J=Fin x Fin

Let $\omega$ be the vector space of all real sequences, together with its classical subspaces $\ell_\infty$, $c$, $c_0$, $\ell_1$, $c_{00}$ of bounded, convergent, convergent to $0$, absolutely summable, and eventually $0$ sequences, respectively. 
In addition, given an ideal $\mathcal{I}$ on $\mathbf{N}$, we define also the following subspaces of $\omega$: 
\begin{displaymath}
\begin{split}
\ell_\infty(\mathcal{I}) &:=\{\,\mathcal{I}\text{-bounded sequences }\}, \\
c(\mathcal{I})& :=\{\,\mathcal{I}\text{-convergent sequences }\}, \\
c_b(\mathcal{I})&:= \{\,\mathcal{I}\text{-convergent bounded sequences }\}. \\
%c_0(\mathcal{I})& :=\{\,\mathcal{I}\text{-convergent to }0\text{  sequences }\}. \textcolor{red}{[Serve?]}
\end{split}
\end{displaymath}
%Every subspace of $\ell_\infty$ will be endowed with the topology induced by the supremum norm. 
%Denote by $c(\mathcal{I})$ and $c_b(\mathcal{I}):=c(\mathcal{I}) \cap \ell_\infty$ the vector spaces of $\mathcal{I}$-convergent sequences and $\mathcal{I}$-convergent bounded sequences, respectively. 
Note that $c=c(\mathrm{Fin})=c_b(\mathrm{Fin})$ 
%\textcolor{red}{$c_0=c_0(\mathrm{Fin})$,} 
and $\ell_\infty=\ell_\infty(\mathrm{Fin})$. Here, a sequence $x \in \omega$ is $\mathcal{I}$-bounded if there exists $k$ such that $\{n\in \mathbf{N}: |x_n|\ge k\} \in \mathcal{I}$. 
Moreover, $x$ is $\mathcal{I}$-convergent to $\eta \in \mathbf{R}$, shortened as $\mathcal{I}\text{-}\lim x=\eta$, provided that $\{n \in \mathbf{N}: |x_n-\eta|>\varepsilon\} \in \mathcal{I}$ for all $\varepsilon>0$. 
Every subspace of $\ell_\infty$ will be endowed with the topology induced by the supremum norm. 
In the literature, $\mathcal{Z}$-convergence is usually called \emph{statistical convergence}. %, see e.g. \cite{}.  

At this point, for each infinite real matrix $A=(a_{n,k})$, let $\omega_A$ be the set of sequences $x$ such that $Ax$ is well defined, so that
$$
\textstyle 
\omega_A:=\left\{x \in \omega: \sum_ka_{n,k}x_k \text{ converges for all }n\right\}.
$$
For all nonempty $X,Y\subseteq \omega$, let $(X,Y)$ be the matrix class of matrices $A$ which map sequences in $X$ into sequences in $Y$, i.e.,
$
\textstyle 
(X,Y):=\{A: x \in \omega_A \text{ and } Ax \in Y \text{ for all }x \in X\}.
$ 
Given ideals $\mathcal{I}, \mathcal{J}$ on $\mathbf{N}$, a matrix $A$ is $\bm{(\mathcal{I}, \mathcal{J})}$\textbf{-regular} if it is maps $\mathcal{I}$-convergent bounded sequences into $\mathcal{J}$-convergent bounded sequences, preserving the corresponding ideal limits, that is,  
$$
\textstyle 
A \in (c_b(\mathcal{I}),c_b(\mathcal{J}))
\quad \text{ and } \quad 
\mathcal{I}\text{-}\lim x=\mathcal{J}\text{-}\lim Ax
\,\,\text{ for all }
 x \in c_b(\mathcal{I}).
$$
Note that $(\mathrm{Fin}, \mathrm{Fin})$-regular matrices are simply the classical regular matrices. 
%a matrix $A$ is said to be $(\mathcal{I}, \mathcal{J})$\emph{-conservative} if it maps $\mathcal{I}$-convergent bounded sequences into $\mathcal{J}$-convergent bounded sequences, i.e., $A \in (c_b(\mathcal{I}),c_b(\mathcal{J}))$. 
%If, in addition, $A$ preserves the corresponding ideal limits then it is called $(\mathcal{I}, \mathcal{J})$\emph{-regular}. 
%The family of $(\mathcal{I}, \mathcal{J})$-regular matrices is denoted by $\mathscr{R}(\mathcal{I}, \mathcal{J})$, that is,
%$$
%\mathscr{R}(\mathcal{I}, \mathcal{J}):=\{A \in (c_b(\mathcal{I}),c_b(\mathcal{J})): 
%\forall x \in c_b(\mathcal{I}), \,\,
%\mathcal{I}\text{-}\lim x=\mathcal{J}\text{-}\lim Ax\}.
%$$
%Note that $\mathscr{R}(\mathcal{I}, \mathcal{J}) \subseteq \mathscr{R}(\mathrm{Fin}, \mathcal{J})$ and that $\mathscr{R}:=\mathscr{R}(\mathrm{Fin}, \mathrm{Fin})$ is simply the family of classical regular matrices. 
Probably the most important regular matrix is the Cesàro matrix $C_1=(a_{n,k})$ defined by $a_{n,k}=\frac{1}{n}$ if $k\le n$ and $a_{n,k}=0$ otherwise. 
%A characterization of $(\mathcal{I}, \mathcal{J})$-regular matrices has been recently given in \cite{ConnorLeo}. 
Classes of $(\mathcal{I}, \mathcal{J})$-regular matrices have been recently used and characterized in \cite{ConnorLeo}.
In particular, the following result extends the classical Silverman--Toeplitz characterization, see \cite[Theorem 1.2 and Theorem 1.3]{ConnorLeo}:
\begin{thm}\label{thm:connotleofinJ}
A matrix $A$ is $(\mathrm{Fin},\mathcal{I})$-regular if and only if:
\begin{enumerate}[label={\rm (\textsc{R}\arabic{*})}]
\item \label{item:R1} $\sup_n \sum_k|a_{n,k}|<\infty$\textup{;}
\item \label{item:R2} $\mathcal{I}\text{-}\lim_n a_{n,k}=0$ for all $k \in \mathbf{N}$\textup{;}
\item \label{item:R3} $\mathcal{I}\text{-}\lim_n \sum_k a_{n,k}=1$\textup{.}
\end{enumerate}
\end{thm}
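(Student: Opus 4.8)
The plan is to adapt the classical Silverman--Toeplitz argument (the case $\mathcal{I}=\mathrm{Fin}$), replacing ordinary limits of columns and of row sums by $\mathcal{I}$-limits, while keeping track of the two standing facts that $\mathcal{I}$ is closed under finite unions and that, by definition, $c_b(\mathcal{I})\subseteq \ell_\infty$.

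For the sufficiency of \ref{item:R1}--\ref{item:R3}, fix $x\in c$ with $\lim x=\eta$, and set $M:=\sup_k|x_k|$ and $C:=\sup_n\sum_k|a_{n,k}|<\infty$ by \ref{item:R1}. Then for each $n$ the series $\sum_k a_{n,k}x_k$ converges absolutely and $|(Ax)_n|\le CM$, so $x\in\omega_A$ and $Ax\in\ell_\infty$. Writing
\[
(Ax)_n-\eta=\sum_{k}a_{n,k}(x_k-\eta)+\eta\Bigl(\sum_k a_{n,k}-1\Bigr),
\]
fix $\varepsilon>0$ and pick $N$ with $|x_k-\eta|<\varepsilon$ for $k>N$, so that $\bigl|\sum_k a_{n,k}(x_k-\eta)\bigr|\le \sum_{k\le N}|a_{n,k}|\,|x_k-\eta|+\varepsilon C$. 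By \ref{item:R2} together with finite additivity and heredity of $\mathcal{I}$, the set of $n$ with $\sum_{k\le N}|a_{n,k}|\,|x_k-\eta|>\varepsilon$ belongs to $\mathcal{I}$ (it is contained in $\bigcup_{k\le N}\{n:|a_{n,k}|>\varepsilon/(2MN)\}$), and by \ref{item:R3} the set of $n$ with $|\eta|\,\bigl|\sum_k a_{n,k}-1\bigr|>\varepsilon$ belongs to $\mathcal{I}$; hence $\{n:|(Ax)_n-\eta|>(C+2)\varepsilon\}\in\mathcal{I}$. Since $\varepsilon$ is arbitrary, $\mathcal{I}\text{-}\lim Ax=\eta$, so $Ax\in c_b(\mathcal{I})$ with the correct ideal limit, i.e. $A$ is $(\mathrm{Fin},\mathcal{I})$-regular.

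For the necessity, suppose $A$ is $(\mathrm{Fin},\mathcal{I})$-regular. Applying this to the unit vector $e_k\in c_0$ gives $Ae_k=(a_{n,k})_n\in c_b(\mathcal{I})$ with $\mathcal{I}\text{-}\lim_n a_{n,k}=\lim e_k=0$, which is \ref{item:R2}; applying it to the constant sequence $\mathbf{1}\in c\subseteq\omega_A$ shows that $\sum_k a_{n,k}$ converges for every $n$ and that $\mathcal{I}\text{-}\lim_n\sum_k a_{n,k}=1$, which is \ref{item:R3}. For \ref{item:R1}, fix $n$: the partial-sum functionals $x\mapsto\sum_{k\le m}a_{n,k}x_k$ on the Banach space $c_0$ are pointwise bounded (their limit exists for every $x\in c_0\subseteq\omega_A$), so by the Banach--Steinhaus theorem $\sum_k|a_{n,k}|<\infty$ and $\phi_n:x\mapsto(Ax)_n$ is a bounded linear functional on $c_0$ with $\|\phi_n\|=\sum_k|a_{n,k}|$. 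Since $Ax\in c_b(\mathcal{I})\subseteq\ell_\infty$ for every $x\in c_0$, the family $\{\phi_n:n\in\mathbf{N}\}$ is pointwise bounded, so a second application of Banach--Steinhaus yields $\sup_n\sum_k|a_{n,k}|=\sup_n\|\phi_n\|<\infty$, which is \ref{item:R1}.

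The only genuine obstacle is the verification of \ref{item:R1}: exactly as in the classical theorem it cannot be obtained by a direct computation but rests on a uniform-boundedness (equivalently gliding-hump) argument, and it is precisely here that one uses only the boundedness built into the target space $c_b(\mathcal{I})$ — so the same proof would go through with $c_b(\mathcal{I})$ replaced by $\ell_\infty(\mathcal{I})$ or by $\ell_\infty$. Everything else is a routine transcription of the $\mathcal{I}=\mathrm{Fin}$ case.
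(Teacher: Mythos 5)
Your proof is correct. Note that the paper itself does not prove this theorem --- it imports it from the cited preprint of Connor and Leonetti (Theorems 1.2 and 1.3 there) --- so there is no in-paper argument to compare against; what you have written is a sound, self-contained proof along the standard Silverman--Toeplitz lines. The sufficiency direction is a routine transcription (your containment $\{n:\sum_{k\le N}|a_{n,k}|\,|x_k-\eta|>\varepsilon\}\subseteq\bigcup_{k\le N}\{n:|a_{n,k}|>\varepsilon/(2MN)\}$ checks out, modulo the trivial case $x=0$), and for necessity you correctly isolate the one nontrivial point: (R2) and (R3) follow from testing on $e_k$ and $\mathbf{1}$, while (R1) needs the two-step Banach--Steinhaus argument on $c_0$, which is available precisely because the definition of $(\mathrm{Fin},\mathcal{I})$-regularity sends $c$ into $c_b(\mathcal{I})\subseteq\ell_\infty$ rather than merely into $c(\mathcal{I})$. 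That last observation is exactly the reason the characterization takes this clean form for the bounded target space, and it is worth having made explicit.
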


Finally, for each real sequence $x$, matrix $A$, and ideal $\mathcal{I}$, define 
$$
\textstyle 
\Sigma_{x,A}(\mathcal{I}):=\left\{\sigma \in \Sigma: A\sigma(x) \in c(\mathcal{I})\right\}.
$$
Note that $\Sigma_{x,A}=\Sigma_{x,A}(\mathrm{Fin})$. 
Our main result follows (for the proof, see Section \ref{sec:mainproof}):
%Accordingly, we generalize Theorem \ref{thm:keoghpetersen} as follows: 
\begin{thm}\label{thm:mainTauberianFinI}
Let $x$ be a divergent sequence, $\mathcal{I}$ be a Borel ideal on $\mathbf{N}$ which does not contain an isomorphic copy of $\mathrm{Fin}\times \mathrm{Fin}$, and $A$ be a $(\mathrm{Fin}, \mathcal{I})$-regular matrix. Then $\Sigma_{x,A}(\mathcal{I})$ is meager. 
\end{thm}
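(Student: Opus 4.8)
I will argue by contradiction: assuming $\Sigma_{x,A}(\mathcal I)$ is non-meager, I will extract a single subsequence of $x$ that $A$ maps to a bounded sequence with two distinct $\mathcal I$-cluster points, which is impossible since a bounded $\mathcal I$-convergent sequence has a unique $\mathcal I$-cluster point (equal to its $\mathcal I$-limit). First the reductions. If $x$ is unbounded one may, from any finite increasing sequence, append a single index $n$ with $|x_n|$ arbitrarily large relative to the current data, and iterating produces for comeager-many $\sigma$ a sequence $A\sigma(x)$ that fails to be $\mathcal I$-bounded, hence fails to be $\mathcal I$-convergent; so $\Sigma_{x,A}(\mathcal I)$ is meager in that case. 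Thus assume $x$ bounded and divergent, put $a:=\liminf_n x_n<\limsup_n x_n=:b$ and $\varepsilon:=(b-a)/4$, and fix a small $\delta>0$ together with disjoint infinite sets $P,Q\subseteq\mathbf N$ with $|x_n-a|<\delta$ for $n\in P$ and $|x_n-b|<\delta$ for $n\in Q$. Consider $\Phi\colon\Sigma\to\omega$, $\Phi(\sigma):=A\sigma(x)$: by \ref{item:R1} each coordinate $\sigma\mapsto\sum_k a_{m,k}x_{\sigma(k)}$ is continuous and $\sup_\sigma\|A\sigma(x)\|_\infty\le\|x\|_\infty\sup_m\sum_k|a_{m,k}|<\infty$, so $\Phi$ is continuous into the product topology with relatively compact image. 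Since $\mathcal I$ is Borel, $c_b(\mathcal I)$ is a Borel subset of $\ell_\infty$, so $\Sigma_{x,A}(\mathcal I)=\Phi^{-1}(c_b(\mathcal I))$ is Borel, has the Baire property, and, being non-meager, is comeager in some basic clopen set $U_s=\{\sigma\in\Sigma:s\sqsubseteq\sigma\}$.

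The core is a recursive construction of $\sigma\in U_s$ in alternating stages, type $a$ on odd stages and type $b$ on even ones. On a type-$a$ stage I append to the current finite sequence a long run of indices drawn from the tail of $P$; invoking \ref{item:R1}, \ref{item:R2}, \ref{item:R3} (Theorem~\ref{thm:connotleofinJ}) — the head columns have $\mathcal I$-small supremum and the row sums $\mathcal I$-converge to $1$ — this forces a fresh batch $B_i$ of coordinates $m$ to satisfy $(A\sigma(x))_m\le a+\varepsilon$, and type-$b$ stages do the symmetric thing with $Q$ and $b-\varepsilon$. Two requirements must be met at once: (i) $\sigma$ must land in the comeager set $\Sigma_{x,A}(\mathcal I)$, which I enforce by letting a fixed winning strategy for the second player in the Banach--Mazur game on $\Sigma_{x,A}(\mathcal I)\cap U_s$ dictate every alternate extension; and (ii) the level sets $M_a:=\{m:(A\sigma(x))_m\le a+\varepsilon\}$ and $M_b:=\{m:(A\sigma(x))_m\ge b-\varepsilon\}$ must both be $\mathcal I$-positive, so that by boundedness and a routine compactness argument $A\sigma(x)$ has an $\mathcal I$-cluster point $\le a+\varepsilon$ and one $\ge b-\varepsilon$ — contradicting $\sigma\in\Sigma_{x,A}(\mathcal I)$.

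The difficulty is entirely in (ii): the batch $B_i$ is not an interval of our choosing but is imposed by the row geometry of $A$, and a long block inserted at a later stage of the opposite type can ``unlock'' part of an earlier batch, so it is not at all clear that $M_a$ can be kept $\mathcal I$-positive. This is exactly where the hypothesis that $\mathcal I$ is Borel and contains no isomorphic copy of $\mathrm{Fin}\times\mathrm{Fin}$ enters, via the recent results on $\mathcal I$-Baire classes and filter games. On one hand those results legitimate the descriptive-set-theoretic reductions above and allow one, when convenient, to restrict attention to a single prospective $\mathcal I$-limit value. On the other hand — the crucial input — they furnish a game in which a ``builder'' can, against whatever the locking mechanism does, steer the accumulated batches into an $\mathcal I$-positive set; such a strategy must fail precisely when a copy of $\mathrm{Fin}\times\mathrm{Fin}$ sits inside $\mathcal I$, since then the batches can all be absorbed by $\mathcal I$.

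Interleaving this steering strategy with the Banach--Mazur strategy of (i) produces the required $\sigma$, and hence the contradiction; when $\mathcal I=\mathrm{Fin}$ the scheme degenerates to the argument of Keogh and Petersen behind Theorem~\ref{thm:keoghpetersen}. I expect the main obstacle to be exactly the simultaneous running of the two strategies in (i) and (ii): one must check that the freedom left by each is enough for the other, which is precisely the technical content that the cited filter-game and $\mathcal I$-Baire-class results are tailored to provide. A secondary point worth care is the passage from ``$\Sigma_{x,A}(\mathcal I)$ non-meager'' to ``comeager on a basic clopen set with a usable single-valued $\mathcal I$-limit map'', but this is standard once the Borel-ness of $c_b(\mathcal I)$ and of the $\mathcal I$-limit operation are in hand.
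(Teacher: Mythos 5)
Your overall strategy (produce, inside the hypothetically non-meager set $\Sigma_{x,A}(\mathcal I)$, a single $\sigma$ for which $A\sigma(x)$ has two $\mathcal I$-cluster points separated by $b-a-2\varepsilon$) is a reasonable plan, and you have correctly located where the hypothesis on $\mathcal I$ must enter. But the proposal has a genuine gap precisely at the point you flag as ``the difficulty'': you never actually establish that the level sets $M_a$ and $M_b$ can be kept $\mathcal I$-positive. Saying that the filter-game results ``furnish a game in which a builder can steer the accumulated batches into an $\mathcal I$-positive set'' is not a proof: in Laflamme's game $G(\mathcal I)$ player II wins by choosing finite subsets of sets $A_n\in\mathcal I^\star$ \emph{handed to him by player I}, and you have not exhibited the dictionary between the rows ``unlocked'' by your blocks from $P$ or $Q$ and legal positions of that game, nor shown that the row-sets you can force into $M_a$ at a given stage lie inside some member of $\mathcal I^\star$ in a way compatible with the strategy. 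This reduction is the entire content of the theorem, and it is missing. The same defect infects your unbounded reduction: appending single indices with $|x_n|$ huge makes \emph{some} coordinates of $A\sigma(x)$ large, but $\mathcal I$-unboundedness requires $\{m:|(A\sigma(x))_m|\ge k\}\notin\mathcal I$ for every $k$, i.e.\ control over an $\mathcal I$-positive set of rows --- exactly the difficulty you have not resolved. (You also do not address well-definedness of $A\sigma(x)$ when $A$ is not row finite.)

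For comparison, the paper avoids the two-cluster-point construction entirely in the bounded case. It considers the limit functional $T:\Sigma_{x,A}(\mathcal I)\to\mathbf R$, $\sigma\mapsto \mathcal I\text{-}\lim A\sigma(x)$, which is an $\mathcal I$-pointwise limit of continuous functions $\sigma\mapsto\sum_k a_{n,k}x_{\sigma(k)}$; the Filip\'{o}w--Szuca theorem (Theorem \ref{thm:filipowszuca}, whose hypothesis is exactly the game/diagonalizability condition equivalent, for Borel ideals, to not containing $\mathrm{Fin}\times\mathrm{Fin}$) upgrades $T$ to a genuine Baire-one function on the metrizable space $\Sigma_{x,A}(\mathcal I)$, so its continuity points are comeager there; yet $T$ oscillates by at least $\limsup x-\liminf x$ on every ball, hence is everywhere discontinuous; therefore $\Sigma_{x,A}(\mathcal I)$ is meager in itself and, being dense, meager in $\Sigma$. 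This packages all the combinatorics you would have to do by hand into one cited theorem. The unbounded case is handled separately with Talagrand's interval characterization of meager ideals, which is what lets one make $|\sum_k a_{n,k}x_{\sigma(k)}|$ large on \emph{all} rows of an entire block of the partition, and hence on an $\mathcal I$-positive set. If you want to salvage your direct construction, that Talagrand-style blocking is the tool you are missing; as written, the proposal does not constitute a proof.
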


Some remarks are in order. First of all, Theorem \ref{thm:mainTauberianFinI} holds for $(\mathrm{Fin}, \mathcal{I})$-regular matrices, hence in particular it holds for regular matrices. Secondly, all $F_{\sigma\delta}$-ideals satisfy the above hypothesis, see Remark \ref{rmk:caseinclusions} below. 
In particular, Theorem \ref{thm:mainTauberianFinI} holds for 
%all analytic P-ideals (including $\mathcal{Z}$)
the ideal $\mathcal{Z}$ of asymptotic density zero sets, 
for the ideals generated by nonnegative regular matrices (see \cite[Proposition 13]{MR3405547}),
for the ideal $\mathrm{Fin}$ (hence, providing another proof of Theorem \ref{thm:keoghpetersen}), and for the ideal of Banach density zero sets. 
Lastly, if $x$ is convergent then $\Sigma_{x,A}(\mathcal{I})=\Sigma$. 

Putting everything together, we obtain:
\begin{cor}\label{cor:characterization}
A real sequence $x$ is convergent if and only if there exists a regular matrix $A$ such that $\{\sigma \in \Sigma: A\sigma(x) \text{ is statistically convergent}\,\}$ is not meager. 
\end{cor}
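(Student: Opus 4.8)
The plan is to obtain Corollary \ref{cor:characterization} as an immediate consequence of Theorem \ref{thm:mainTauberianFinI} specialized to the ideal $\mathcal{I}=\mathcal{Z}$. Since $\mathcal{Z}$-convergence is precisely statistical convergence, the set $\{\sigma \in \Sigma: A\sigma(x) \text{ is statistically convergent}\}$ appearing in the statement is exactly $\Sigma_{x,A}(\mathcal{Z})$, so the corollary reads: $x\in c$ if and only if there is a regular matrix $A$ with $\Sigma_{x,A}(\mathcal{Z})$ non-meager. I would prove the two implications separately.

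For the implication ``$x\in c \Rightarrow$ such an $A$ exists'', I would take $A$ to be any fixed regular matrix, for instance the Ces\`aro matrix $C_1$ (or even the identity matrix). If $x$ converges to some $\eta$, then every subsequence $\sigma(x)$ converges to $\eta$, hence $A\sigma(x)$ converges to $\eta$ by regularity, and in particular $A\sigma(x)$ is statistically convergent. Thus $\Sigma_{x,A}(\mathcal{Z})=\Sigma$. Since $\Sigma$ is a $G_\delta$ subset of $\mathbf{N}^{\mathbf{N}}$, hence a Polish space by Alexandrov's theorem, it is non-meager in itself by the Baire category theorem, which settles this direction.

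For the converse I would argue by contraposition: assuming $x$ is divergent, I want to show that $\Sigma_{x,A}(\mathcal{Z})$ is meager for every regular $A$. This follows from Theorem \ref{thm:mainTauberianFinI} applied with $\mathcal{I}=\mathcal{Z}$, once three hypotheses are checked. First, $\mathcal{Z}$ is Borel --- indeed $F_{\sigma\delta}$, as recalled in Section \ref{sec:main}. Second, $A$ is $(\mathrm{Fin},\mathcal{Z})$-regular: being $(\mathrm{Fin},\mathrm{Fin})$-regular, $A$ maps bounded convergent sequences to bounded convergent sequences preserving limits, and since a bounded convergent sequence is statistically convergent to the same limit, this gives $A\in(c_b(\mathrm{Fin}),c_b(\mathcal{Z}))$ with $\mathcal{Z}\text{-}\lim Ax=\mathrm{Fin}\text{-}\lim x$ for all $x\in c_b(\mathrm{Fin})=c$. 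Third, $\mathcal{Z}$ contains no isomorphic copy of $\mathrm{Fin}\times\mathrm{Fin}$.

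I expect the third point to be the only genuinely non-routine step, although it is exactly the content of Remark \ref{rmk:caseinclusions}: an isomorphic embedding $\iota$ of $\mathrm{Fin}\times\mathrm{Fin}$ into the $F_{\sigma\delta}$-ideal $\mathcal{Z}$ would exhibit $\mathrm{Fin}\times\mathrm{Fin}$ as the preimage of $\mathcal{Z}$ under the continuous map $S\mapsto \iota^{-1}[S]$, hence as an $F_{\sigma\delta}$ set, contradicting the fact that $\mathrm{Fin}\times\mathrm{Fin}$ is not $F_{\sigma\delta}$. Granting this, Theorem \ref{thm:mainTauberianFinI} yields that $\Sigma_{x,A}(\mathcal{Z})$ is meager, which is the desired contradiction and completes the proof. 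Everything else reduces to the three observations: statistical convergence $=$ $\mathcal{Z}$-convergence, ``regular $\Rightarrow$ $(\mathrm{Fin},\mathcal{Z})$-regular'', and the non-meagerness of the Polish space $\Sigma$.
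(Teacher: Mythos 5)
Your proof is correct and follows essentially the same route as the paper: specialize Theorem \ref{thm:mainTauberianFinI} to $\mathcal{I}=\mathcal{Z}$ for the divergent case, and use $\Sigma_{x,A}(\mathcal{Z})=\Sigma$ together with the Baire category theorem on the Polish space $\Sigma$ for the convergent case. The one point where you deviate is the verification that $\mathcal{Z}$ contains no isomorphic copy of $\mathrm{Fin}\times\mathrm{Fin}$: the paper obtains this from Solecki's result that every $F_{\sigma\delta}$-ideal satisfies condition \ref{item:c4} (Remark \ref{rmk:caseinclusions}) combined with the equivalence \ref{item:c3}$\Leftrightarrow$\ref{item:c4} of Theorem \ref{thm:laflamme}, whereas you give a direct complexity argument via continuous preimages; your argument is valid but rests on the fact that $\mathrm{Fin}\times\mathrm{Fin}$ is not $F_{\sigma\delta}$ (it is $F_{\sigma\delta\sigma}$-complete), a true and standard fact which the paper does not state and which itself requires a nontrivial descriptive-set-theoretic proof, so you should cite it rather than treat it as immediate.
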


In particular, letting $x$ be the sequence $(0,1,0,1,\ldots)$,  %and $A$ be a regular matrix, 
we obtain that $\{\sigma(x): \sigma \in \Sigma\}=\{0,1\}^\mathbf{N}$ and $\Sigma_{x,A}(\mathcal{Z})\neq \Sigma$, which give us a stronger version of Steinhaus' theorem:
\begin{cor}\label{cor:steinhaus}
For each regular matrix $A$, there exists a $\{0,1\}$-valued sequence $x$ such that $Ax$ is not statistically convergent.
\end{cor}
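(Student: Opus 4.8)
The plan is to derive Corollary \ref{cor:steinhaus} directly from Corollary \ref{cor:characterization} by exhibiting an explicit divergent $\{0,1\}$-valued sequence whose associated set of ``good'' subsequences is forced to be meager. First I would fix an arbitrary regular matrix $A$ and take $x=(0,1,0,1,\ldots)$, the alternating sequence; this is bounded and divergent, so it meets the hypotheses of Corollary \ref{cor:characterization}. Applying that corollary in the contrapositive form: since $x$ is \emph{not} convergent, for \emph{every} regular matrix $A$ the set $\{\sigma \in \Sigma: A\sigma(x) \text{ is statistically convergent}\}=\Sigma_{x,A}(\mathcal{Z})$ is meager, hence in particular a proper subset of $\Sigma$. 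So there exists some $\sigma^\star \in \Sigma$ with $A\sigma^\star(x)$ not statistically convergent.

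The second ingredient is the observation that every $\{0,1\}$-valued sequence arises as a subsequence of $x$. Concretely, for any $z \in \{0,1\}^{\mathbf{N}}$ one builds $\sigma \in \Sigma$ recursively so that $x_{\sigma(n)}=z_n$: whenever $z_n=0$ pick $\sigma(n)$ to be the least even integer exceeding $\sigma(n-1)$, and whenever $z_n=1$ pick the least odd integer exceeding $\sigma(n-1)$; strict monotonicity is immediate and $\sigma(x)=z$ by construction. Thus $\{\sigma(x):\sigma\in\Sigma\}=\{0,1\}^{\mathbf{N}}$, as already noted in the text preceding Corollary \ref{cor:steinhaus}. Combining the two facts: the sequence $y:=\sigma^\star(x)$ is a $\{0,1\}$-valued sequence, and $Ay=A\sigma^\star(x)$ fails to be statistically convergent, which is exactly the assertion.

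I expect essentially no obstacle here, since all the substantive work is encapsulated in Corollary \ref{cor:characterization} (and through it, Theorem \ref{thm:mainTauberianFinI}); the only minor point to verify is that the alternating sequence is genuinely covered — i.e.\ that it is divergent (so Corollary \ref{cor:characterization} applies with the ``only if'' direction failing) and that the surjectivity $\{\sigma(x):\sigma\in\Sigma\}=\{0,1\}^{\mathbf{N}}$ holds, both of which are elementary. One should also remark that $\mathcal{Z}$ is an $F_{\sigma\delta}$-ideal and therefore a legitimate choice in Theorem \ref{thm:mainTauberianFinI}, so that Corollary \ref{cor:characterization} is available; but this has already been established in the discussion following Theorem \ref{thm:mainTauberianFinI}. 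The proof is therefore a two-line deduction once the preceding results are in hand.
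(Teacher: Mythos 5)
Your proof is correct and follows essentially the same route as the paper: take the alternating sequence $x=(0,1,0,1,\ldots)$, note that every $\{0,1\}$-valued sequence is a subsequence of $x$, and invoke Theorem \ref{thm:mainTauberianFinI} (via Corollary \ref{cor:characterization}) with $\mathcal{I}=\mathcal{Z}$ to conclude $\Sigma_{x,A}(\mathcal{Z})\neq\Sigma$. The only nitpick is that with the paper's indexing ($x_1=0$, $x_2=1$, \dots) the parities in your explicit construction of $\sigma$ are swapped (odd indices give $0$, even give $1$), but this is immaterial.
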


On a similar direction, letting $A$ be the infinite identity matrix, it follows that, a real sequence $x$ is convergent if and only if $\{\sigma \in \Sigma: \sigma(x) \in c(\mathcal{I})\}$ is not meager, provided that $\mathcal{I}$ is an ideal on $\mathbf{N}$ as in Theorem \ref{thm:mainTauberianFinI}. For the latter class of ideals, this gives us a generalization of \cite[Corollary 2.7]{MR4165727}, which states that $\lim x=\eta$ if and only if $\{\sigma \in \Sigma: \mathcal{I}\text{-}\lim \sigma(x)=\eta\}$ is not meager, provided that $\mathcal{I}$ is a meager ideal and $\mathcal{I}\text{-}\lim x=\eta$ (note that $\eta$ is explicit). 
%, cf. Section \ref{sec:applications} below for a related result. 
Other Tauberian theorems related to statistical convergence can be found in \cite{
MR4052262, %%%%RIMETTERE!!!!
MR2324333, MR1659877, MR1653457, MR1002541, MR1260176}. 

Finally, note that Theorem \ref{thm:mainTauberianFinI} does not hold without any restriction on the ideal $\mathcal{I}$. 
Indeed, if $A$ is a $(\mathrm{Fin},\mathcal{I})$-regular matrix, then it satisfies condition \ref{item:R1} of Theorem \ref{thm:connotleofinJ}, so that $A \in (\ell_\infty,\ell_\infty)$. 
Thus, if $x\in \ell_\infty\setminus c$ and $\mathcal{I}$ is a maximal ideal, then every subsequence $\sigma(x)$ is bounded, hence $A\sigma(x) \in c_b(\mathcal{I})$. 
In particular, for each bounded divergent sequence $x$, there exist an ideal $\mathcal{I}$ and a $(\mathrm{Fin}, \mathcal{I})$-regular matrix $A$ such that $\Sigma_{x,A}(\mathcal{I})$ is not meager.

\section{Preliminaries}\label{sec:preliminaries}

Given an ideal $\mathcal{I}$, let $G(\mathcal{I})$ be the following game introduced by Laflamme in \cite{MR1367134}: at stage $n \in \mathbf{N}$ player I chooses a set $A_n \in \mathcal{I}^\star$ and, then, player II chooses a nonempty finite set $F_n\subseteq A_n$. At the end of the game, player II is declared the winner if $\bigcup_n F_n \notin \mathcal{I}$. %The game $G(\mathcal{I})$ has been . % and denoted by $\mathfrak{G}(\mathcal{I}^\star, \mathrm{Fin}, \mathcal{I}^+)$. 
Moreover, a sequence $(F_k)$ of nonempty finite sets is said to be a $\mathcal{I}^\star$\emph{-universal set} if each $A \in \mathcal{I}^\star$ contains some $F_k$. We say that $\mathcal{I}^\star$ is $\omega$\emph{-diagonalizable by} $\mathcal{I}^\star$\emph{-universal sets} if there exists an infinite matrix $(F_{n,k})$ of nonempty finite sets such that each row is a $\mathcal{I}^\star$-universal set and, moreover, for each $A \in \mathcal{I}^\star$ there exist $n,m \in \mathbf{N}$ such that $F_{n,k} \cap A\neq \emptyset$ for all $k>m$. 
\begin{thm}\label{thm:laflamme}
Let $\mathcal{I}$ be an ideal on $\mathbf{N}$. Then the following are equivalent:
\begin{enumerate}[label={\rm (\textsc{c}\arabic{*})}]
\item \label{item:c1} $\mathcal{I}^\star$ is $\omega$-diagonalizable by $\mathcal{I}^\star$-universal sets\textup{;}
\item \label{item:c2} player II has a winning strategy in the game $G(\mathcal{I})$. 
\end{enumerate}
If, in addition, $\mathcal{I}$ is a Borel ideal, then they are also equivalent to\textup{:}
\begin{enumerate}[label={\rm (\textsc{c}\arabic{*})}]
\setcounter{enumi}{2}
\item \label{item:c3} $\mathcal{I}$ does not contain an isomorphic copy of $\mathrm{Fin}\times \mathrm{Fin}$\textup{;}
\item \label{item:c4} $\mathcal{I}$ is $F_\sigma$-separated from its dual filter $\mathcal{I}^\star$ \textup{(}that is, there exists an $F_\sigma$-set $\mathcal{K}$ such that $\mathcal{I}\subseteq \mathcal{K}$ and $\mathcal{K}\cap \mathcal{I}^\star=\emptyset$\textup{)}.
\end{enumerate}
\end{thm}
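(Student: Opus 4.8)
The plan is to establish the three purely combinatorial conditions \ref{item:c1}, \ref{item:c2}, \ref{item:c4} as equivalent for an \emph{arbitrary} ideal, and then to splice \ref{item:c3} into the chain in the Borel case, which is where the only hard implication lives. Concretely, I would run the cycle \ref{item:c2}$\Rightarrow$\ref{item:c1}$\Rightarrow$\ref{item:c4}$\Rightarrow$\ref{item:c2} (valid for every ideal), add the soft implication \ref{item:c2}$\Rightarrow$\ref{item:c3}, and close the loop with the Borel-only arrow \ref{item:c3}$\Rightarrow$\ref{item:c2}. For \ref{item:c2}$\Rightarrow$\ref{item:c1} I would read the diagonalizing matrix off the tree of positions reachable under a winning strategy $\tau$ for player II. The key observation is that II's responses are finite sets, hence countable in number: for a reachable position $s$, the family $R_s$ of all finite sets $\tau$ may output at the next move, as player I ranges over legal continuations in $\mathcal{I}^\star$, is a countable family of nonempty finite sets, and it is $\mathcal{I}^\star$-universal since any $A\in\mathcal{I}^\star$ played after $s$ forces a response contained in $A$ and lying in $R_s$. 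Enumerating the countably many positions as rows yields $(F_{n,k})$ with each row $\mathcal{I}^\star$-universal. For the diagonalization clause I argue by contradiction: if some $A\in\mathcal{I}^\star$ failed it for every row, then at each position infinitely many next-responses would be disjoint from $A$, and player I could always play the filter set witnessing such a response; the resulting play has $\bigcup_nF_n\subseteq A^c\in\mathcal{I}$, so II loses, contradicting that $\tau$ wins.

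For \ref{item:c1}$\Rightarrow$\ref{item:c4} I would exhibit the separating set explicitly. Since $\mathrm{Fin}\subseteq\mathcal{I}$, each row is automatically \emph{infinitely} universal: deleting from $A\in\mathcal{I}^\star$ the finitely many blocks already used keeps the set in $\mathcal{I}^\star$, so it again contains a block, necessarily of larger index, and iterating gives infinitely many. Now put
$$\mathcal{G}:=\{A\subseteq\mathbf{N}:\ \exists\, n\in\mathbf{N},\ F_{n,k}\cap A\neq\emptyset\ \text{for all but finitely many }k\},$$
which is $F_\sigma$. The diagonalization clause gives $\mathcal{I}^\star\subseteq\mathcal{G}$, while infinite universality applied to $A^c$ shows that for $A\in\mathcal{I}$ every row has infinitely many blocks missing $A$, whence $A\notin\mathcal{G}$; thus $\mathcal{G}\cap\mathcal{I}=\emptyset$. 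Since complementation $B\mapsto B^c$ is a self-homeomorphism of $\mathcal{P}(\mathbf{N})$ exchanging $\mathcal{I}$ with $\mathcal{I}^\star$ and preserving the class $F_\sigma$, the set $\{A:A^c\in\mathcal{G}\}$ witnesses \ref{item:c4}. For \ref{item:c4}$\Rightarrow$\ref{item:c2} I would write the separating set as an increasing union $\bigcup_n\mathcal{K}_n$ of closed sets and replace each $\mathcal{K}_n$ by its downward closure, which remains closed (it is the projection along the compact second factor of $\{(A,B):A\subseteq B\}\cap(\mathcal{P}(\mathbf{N})\times\mathcal{K}_n)$) and still misses the upward-closed $\mathcal{I}^\star$; so I may assume each $\mathcal{K}_n$ hereditary. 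Player II then wins as follows: when I plays $A_n\in\mathcal{I}^\star$ one has $A_n\notin\mathcal{K}_n$, so closedness and heredity yield a finite $F_n\subseteq A_n$ outside $\mathcal{K}_n$, and heredity forces every superset of $F_n$ to avoid $\mathcal{K}_n$, so $\bigcup_kF_k\notin\mathcal{K}_n$ for all $n$, i.e. $\bigcup_kF_k\notin\mathcal{K}\supseteq\mathcal{I}$.

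The implication \ref{item:c2}$\Rightarrow$\ref{item:c3} is soft and needs no Borelness: player I has an explicit winning strategy in $G(\mathrm{Fin}\times\mathrm{Fin})$, and if $\iota$ witnesses that $\mathcal{I}$ contains a copy of $\mathrm{Fin}\times\mathrm{Fin}$ then the continuous map $S\mapsto\iota^{-1}[S]$ sends members of the dual filter, respectively the ideal, of $\mathrm{Fin}\times\mathrm{Fin}$ to members of $\mathcal{I}^\star$, respectively $\mathcal{I}$, so I can transport that strategy to win $G(\mathcal{I})$; as the two players cannot both win, II has no winning strategy, i.e. $\neg$\ref{item:c3}$\Rightarrow\neg$\ref{item:c2}.

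The remaining arrow \ref{item:c3}$\Rightarrow$\ref{item:c2} is the crux and the sole place where the Borel hypothesis is used. I would argue contrapositively: assuming II has no winning strategy and $\mathcal{I}$ Borel, the payoff set $\{\bigcup_nF_n\notin\mathcal{I}\}$ is Borel, and I want a winning strategy for player I from which to \emph{extract} a copy of $\mathrm{Fin}\times\mathrm{Fin}$, yielding $\neg$\ref{item:c3}. I expect the difficulty to be twofold. First, $G(\mathcal{I})$ branches over the uncountable set $\mathcal{I}^\star$, so Martin's Borel determinacy does not apply directly; I would first reduce to an equivalent game on a countable tree by restricting player I to a countable family of filter sets rich enough to preserve the existence of strategies (for instance those lying in a countable elementary submodel), and checking that this reduction is faithful is delicate. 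Second, from I's winning strategy I must build pairwise disjoint finite blocks $b_{i,j}$ so that $\bigcup_{(i,j)\in X}b_{i,j}\in\mathcal{I}$ precisely when $X\in\mathrm{Fin}\times\mathrm{Fin}$: selections supported on finitely many columns, together with finitely many further points, appear as legal plays and are therefore forced into $\mathcal{I}$ because the strategy wins, whereas a fusion argument along the strategy tree must keep the ``column-fat'' selections out of $\mathcal{I}$. Making both halves hold simultaneously is the heart of the matter and is exactly the combinatorial content isolating $\mathrm{Fin}\times\mathrm{Fin}$; this is where I expect essentially all the real work of Theorem \ref{thm:laflamme} to be concentrated.
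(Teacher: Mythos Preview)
The paper does not prove this theorem at all: its ``proof'' consists of two citations, to Laflamme for \ref{item:c1}$\Longleftrightarrow$\ref{item:c2} and to Laczkovich--Rec{\l}aw for the remaining equivalences in the Borel case. So you are not reproducing the paper's argument; you are reconstructing the cited literature, and for the most part your reconstruction is accurate. Your arguments for \ref{item:c1}$\Rightarrow$\ref{item:c4}, \ref{item:c4}$\Rightarrow$\ref{item:c2}, and \ref{item:c2}$\Rightarrow$\ref{item:c3} are correct and are essentially what one finds in those sources, and you honestly flag \ref{item:c3}$\Rightarrow$\ref{item:c2} as the hard, Borel-only step and outline the right strategy (reduce to a countably branching game, apply determinacy, extract the $\mathrm{Fin}\times\mathrm{Fin}$ copy from player I's winning strategy).

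There is, however, a genuine gap in your \ref{item:c2}$\Rightarrow$\ref{item:c1}. You write ``enumerating the countably many positions as rows,'' but a position in $G(\mathcal{I})$ records player I's moves $A_1,\ldots,A_n\in\mathcal{I}^\star$, and there are uncountably many such histories; the fact that $R_s\subseteq[\mathbf{N}]^{<\omega}$ is countable for each fixed $s$ does not help. If instead you index rows by II's response histories $(F_1,\ldots,F_n)$, the family of possible next responses is still countable and $\mathcal{I}^\star$-universal, but then your contradiction argument for the diagonalization clause breaks: a response $F\in R_{(F_1,\ldots,F_n)}$ disjoint from $A$ may only be attainable via some sequence $(A_1',\ldots,A_n')\neq(A_1,\ldots,A_n)$, so player I cannot necessarily steer $\tau$ toward it from the actual current history. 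The standard fix (implicit in Laflamme) is to first replace $\tau$ by a winning strategy $\tau'$ that depends only on II's response history $(F_1,\ldots,F_n)$ and I's current move; once this reduction is made, your argument goes through verbatim. You should insert that reduction explicitly.
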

\begin{proof}
See \cite[Theorem 2.16(ii)]{MR1367134} for \ref{item:c1} $\Longleftrightarrow$ \ref{item:c2}. For the other equivalences, see \cite{MR2491780}.
\end{proof}

\begin{rmk}\label{rmk:caseinclusions}
As it has been shown in \cite[Corollary 1.5]{MR1758325}, all $F_{\sigma\delta}$-ideals satisfy condition \ref{item:c4}. Related results on condition \ref{item:c4} can be found in \cite[Proposition 3.6]{MR3090420} and \cite[Theorem 2.1]{MR3968131}.
\end{rmk}

At this point, for each ideal $\mathcal{I}$ and topological space $X$, define the $\mathcal{I}$-Baire one class 
$$
\mathcal{B}_1^{\mathcal{I}}(X):=\left\{f \in \mathbf{R}^X: \exists (f_n) \in C(X)^{\mathbf{N}}, \forall x \in X,\,\, f(x)=\mathcal{I}\text{-}\lim f_n(x)\right\},
$$
where $C(X)$ stands for the space of real-valued continuous functions on $X$. Note that $\mathcal{B}_1^{\mathcal{I}}(X)$ coincides with the classical Baire one class $\mathcal{B}_1(X)$ if $\mathcal{I}=\mathrm{Fin}$. However, it has been shown in \cite[Proposition 8]{MR2491780}, the same holds if $X$ is a complete metric space and $\mathcal{I}$ is an ideal for which player II has a winning strategy in the game $G(\mathcal{I})$, cf. also \cite{MR2520152} for related results. 
%in the case that $X$ is Polish and $\mathcal{I}$ is a $G_{\delta\sigma\delta}$-ideal. 
Some years later, 
%using the characterization given in Theorem \ref{thm:laflamme}, 
Filip\'{o}w and Szuca proved in \cite{MR2899832}, in particular, that the hypothesis of completeness on $X$ is unnecessary to obtain the latter conclusion:
\begin{thm}\label{thm:filipowszuca}
Let $\mathcal{I}$ be an ideal on $\mathbf{N}$ such that $\mathcal{I}^\star$ is $\omega$-diagonalizable by $\mathcal{I}^\star$-universal sets. Moreover, let $X$ be a perfectly normal topological space. Then $\mathcal{B}_1^{\mathcal{I}}(X)=\mathcal{B}_1(X)$.
\end{thm}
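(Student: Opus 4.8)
The plan is to prove the two inclusions separately. The inclusion $\mathcal{B}_1(X)\subseteq\mathcal{B}_1^{\mathcal{I}}(X)$ is immediate, since $\mathrm{Fin}\subseteq\mathcal{I}$ forces ordinary convergence to imply $\mathcal{I}$-convergence, so every witness for $\mathcal{B}_1(X)$ is already a witness for $\mathcal{B}_1^{\mathcal{I}}(X)$. For the reverse inclusion, fix $f\in\mathcal{B}_1^{\mathcal{I}}(X)$ together with continuous $(f_n)$ such that $f(x)=\mathcal{I}\text{-}\lim_n f_n(x)$ for every $x$. I would reduce the whole statement to a measurability claim: by the Lebesgue--Hausdorff--Banach characterization of the first Baire class, valid on perfectly normal spaces, a function $g\colon X\to\mathbf{R}$ lies in $\mathcal{B}_1(X)$ as soon as $g^{-1}(U)$ is $F_\sigma$ for every open $U\subseteq\mathbf{R}$. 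Since open subsets of $\mathbf{R}$ are countable unions of intervals and $F_\sigma$ sets are closed under finite intersections and countable unions, it suffices to prove that $\{x:f(x)>a\}$ is $F_\sigma$ for every $a\in\mathbf{R}$, the sets $\{x:f(x)<b\}$ being handled by applying the result to $-f=\mathcal{I}\text{-}\lim(-f_n)$. I stress that this is the \emph{only} step using perfect normality; the $F_\sigma$-ness of the level sets below will follow from continuity of the $f_n$ alone.

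The heart of the argument is a two-sided threshold (or ``$\delta$-shift'') reformulation. Through the hypothesis and Theorem \ref{thm:laflamme}, $\omega$-diagonalizability provides an $F_\sigma$ set $\mathcal{K}\subseteq\mathcal{P}(\mathbf{N})$ with $\mathcal{I}\subseteq\mathcal{K}$ and $\mathcal{K}\cap\mathcal{I}^\star=\emptyset$ (condition \ref{item:c4}). I would first replace $\mathcal{K}$ by its downward closure $\downarrow\mathcal{K}=\{C:\exists D\in\mathcal{K},\ C\subseteq D\}$: this is still $F_\sigma$ (being the projection of a closed subset of the compact space $\mathcal{P}(\mathbf{N})\times\mathcal{P}(\mathbf{N})$), still contains $\mathcal{I}$, and remains disjoint from $\mathcal{I}^\star$ (if $A\in\mathcal{I}^\star$ and $A\subseteq D$ then $D\in\mathcal{I}^\star$, contradicting $\mathcal{K}\cap\mathcal{I}^\star=\emptyset$). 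Writing $R_\delta(x):=\{n:f_n(x)\ge a+\delta\}$, the claim to prove is
\begin{displaymath}
\{x:f(x)>a\}=\bigcup_{\delta\in\mathbf{Q}^+}\left\{x: R_\delta(x)^c\in\mathcal{K}\right\}.
\end{displaymath}
The inclusion ``$\subseteq$'' uses that $f(x)>a$ forces $R_\delta(x)\in\mathcal{I}^\star$ for some rational $\delta>0$, hence $R_\delta(x)^c\in\mathcal{I}\subseteq\mathcal{K}$; the inclusion ``$\supseteq$'' uses that $R_\delta(x)^c\in\mathcal{K}$ gives $R_\delta(x)^c\notin\mathcal{I}^\star$, i.e. $R_\delta(x)\notin\mathcal{I}$, whereas $f(x)\le a$ would make $R_\delta(x)\subseteq\{n:|f_n(x)-f(x)|\ge\delta\}\in\mathcal{I}$, a contradiction.

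It then remains to prove that each set $\{x:R_\delta(x)^c\in\mathcal{K}\}$ is $F_\sigma$, and this is where the downward closure pays off. Write $\mathcal{K}=\bigcup_p\mathcal{K}_p$ with each $\mathcal{K}_p$ closed and downward closed. A downward-closed closed subset of $\mathcal{P}(\mathbf{N})$ is exactly the family of sets containing none of a countable collection $\mathcal{B}_p$ of minimal finite ``forbidden'' sets, so that $C\in\mathcal{K}_p$ iff $B\not\subseteq C$ for every $B\in\mathcal{B}_p$. Taking $C=R_\delta(x)^c$ and noting that $B\subseteq R_\delta(x)^c$ means $f_n(x)<a+\delta$ for all $n\in B$, I obtain
\begin{displaymath}
\left\{x:R_\delta(x)^c\in\mathcal{K}_p\right\}=\bigcap_{B\in\mathcal{B}_p}\ \bigcup_{n\in B}\left\{x:f_n(x)\ge a+\delta\right\},
\end{displaymath}
which is closed, being an intersection of finite unions of closed sets (only continuity of the $f_n$ enters). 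Hence $\{x:R_\delta(x)^c\in\mathcal{K}\}=\bigcup_p\{x:R_\delta(x)^c\in\mathcal{K}_p\}$ is $F_\sigma$, and the displayed decomposition exhibits $\{x:f(x)>a\}$ as a countable union of $F_\sigma$ sets, hence $F_\sigma$, completing the reduction.

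I expect the main obstacle to be the availability and correct exploitation of the downward-closed $F_\sigma$ separator $\mathcal{K}$: this is precisely the combinatorial content of $\omega$-diagonalizability (equivalently, condition \ref{item:c4}), and it is exactly here that the hypothesis on $\mathcal{I}$ cannot be dropped, since for a general ideal no $F_\sigma$ set separates $\mathcal{I}$ from $\mathcal{I}^\star$ and the level sets of $f$ genuinely climb the Borel hierarchy. The delicacy is that the naive consequences of the universal-set matrix $(F_{n,k})$ mismatch the quantifiers one needs (universality yields a single good column, the diagonalization property yields cofinitely many columns but only a nonempty intersection), so the clean passage to an $F_\sigma$ separator is the subtle point; I would therefore either invoke Theorem \ref{thm:laflamme} directly or reconstruct $\mathcal{K}$ from the matrix. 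Finally, I would note that this argument runs parallel to the complete-metric version recorded above, with perfect normality entering only through the Lebesgue--Hausdorff--Banach step, exactly as completeness did there.
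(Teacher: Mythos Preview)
The paper does not prove this theorem at all: its ``proof'' is a one-line citation to \cite[Theorem~3.2]{MR2899832}. Your proposal therefore cannot be compared line-by-line with the paper, but it is in fact a faithful reconstruction of the Filip\'{o}w--Szuca argument: reduce membership in $\mathcal{B}_1(X)$ to $F_\sigma$-measurability via the Lebesgue--Hausdorff--Banach characterization (this is exactly where perfect normality enters, as you note), and then exhibit each superlevel set as a countable union of closed sets using an $F_\sigma$ separator between $\mathcal{I}$ and $\mathcal{I}^\star$. The downward-closure trick and the description of each closed hereditary $\mathcal{K}_p$ via a countable family of forbidden finite sets are correct and are precisely how one converts the separator into closedness of the sets $\{x:R_\delta(x)^c\in\mathcal{K}_p\}$.

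One point deserves a sharper statement. You obtain the separator $\mathcal{K}$ by invoking Theorem~\ref{thm:laflamme}, but as formulated there the equivalence \ref{item:c1}\,$\Leftrightarrow$\,\ref{item:c4} is asserted only for \emph{Borel} ideals, whereas Theorem~\ref{thm:filipowszuca} makes no Borel assumption. You are right to flag this as the delicate step. The implication you need, \ref{item:c1}\,$\Rightarrow$\,\ref{item:c4}, does hold for arbitrary ideals: after a harmless reindexing of the diagonalizing matrix $(F_{n,k})$ one may assume that for every $A\in\mathcal{I}^\star$ there is $n$ with $F_{n,k}\cap A\neq\emptyset$ for \emph{all} $k$, and then $\mathcal{K}:=\bigcup_n\{B:\forall k,\ F_{n,k}\not\subseteq B\}$ is the desired $F_\sigma$ separator (universality of row $n$ keeps $\mathcal{I}^\star$ out, the diagonalization property puts $\mathcal{I}$ in). This is essentially the construction in \cite{MR2491780}; it would strengthen your write-up to spell it out rather than appeal to Theorem~\ref{thm:laflamme}, which as stated does not cover the non-Borel case. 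With that adjustment your argument is complete and matches the cited source.
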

\begin{proof}
See \cite[Theorem 3.2]{MR2899832}.
\end{proof}
%Note that Theorem \ref{thm:filipowszuca} 
This 
has been also obtained in \cite[Corollary 12]{MR2491780} for nonpathological analytic P-ideals $\mathcal{I}$ and arbitrary topological spaces $X$. 
We recall also the classical Baire classification theorem.
\begin{thm}\label{thm:baireclassification}
Let $X$ be a metrizable space and fix a Baire one function $f \in \mathcal{B}_1(X)$. Then the set of points of continuity of $f$ is a comeager $G_\delta$-set.
\end{thm}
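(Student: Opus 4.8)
The plan is to prove the two assertions separately, since the fact that the continuity set $C$ of $f$ is $G_\delta$ holds for \emph{any} real-valued function and does not use the Baire one hypothesis, whereas the comeagerness of $C$ is where the approximating sequence enters. For the topological part I would introduce the oscillation of $f$ at a point $x$, namely $\mathrm{osc}_f(x):=\inf_U \mathrm{diam}\,f[U]$, where $U$ ranges over the open neighborhoods of $x$. Then $f$ is continuous at $x$ precisely when $\mathrm{osc}_f(x)=0$, and for each $\varepsilon>0$ the set $\{x:\mathrm{osc}_f(x)<\varepsilon\}$ is open; hence $C=\bigcap_n\{x:\mathrm{osc}_f(x)<1/n\}$ is a $G_\delta$-set, and it remains only to show that $X\setminus C$ is meager.

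For the measure-theoretic part, fix a sequence $(f_n)\in C(X)^{\mathbf{N}}$ with $f_n\to f$ pointwise, and fix $\varepsilon>0$. For each $n$ set
\[
E_n:=\bigcap_{p,q\ge n}\{x\in X: |f_p(x)-f_q(x)|\le \varepsilon\}.
\]
Each $E_n$ is closed, being an intersection of closed sets by continuity of the $f_p$; the sequence $(E_n)$ is increasing; and since $(f_n(x))$ is Cauchy at every $x$, we have $X=\bigcup_n E_n$. The key estimate is that $f$ has small oscillation on the interior of each $E_n$: letting $q\to\infty$ in the defining inequality gives $|f_p-f|\le\varepsilon$ on $E_n$ for every $p\ge n$, in particular $|f_n-f|\le\varepsilon$ there; and since $f_n$ is continuous, any point $x\in\mathrm{int}(E_n)$ has a neighborhood on which $f_n$ oscillates by at most $\varepsilon$, whence $f$ oscillates by at most $3\varepsilon$ there. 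Thus $\mathrm{osc}_f(x)\le 3\varepsilon$ for every $x\in\bigcup_n\mathrm{int}(E_n)$.

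The remaining points form a meager set: since $X=\bigcup_n E_n$ with each $E_n$ closed, every $x$ lies either in some $\mathrm{int}(E_n)$ or in some boundary $\partial E_n=E_n\setminus\mathrm{int}(E_n)$, so
\[
\{x:\mathrm{osc}_f(x)>3\varepsilon\}\subseteq X\setminus\textstyle\bigcup_n\mathrm{int}(E_n)\subseteq\bigcup_n\partial E_n,
\]
and each $\partial E_n$ is closed with empty interior, hence nowhere dense, making the right-hand side meager. Finally I would run this for $\varepsilon=1/(3k)$, with $k\in\mathbf{N}$, and let $M$ be the union of the resulting meager sets; then $M$ is meager, and any $x\notin M$ satisfies $\mathrm{osc}_f(x)\le 1/k$ for all $k$, i.e. $\mathrm{osc}_f(x)=0$. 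Therefore $X\setminus C\subseteq M$ is meager, and together with the first part $C$ is a comeager $G_\delta$-set.

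I expect the only genuinely delicate point to be the oscillation estimate on $\mathrm{int}(E_n)$, which combines the uniform bound $|f_n-f|\le\varepsilon$ with the continuity of $f_n$, together with the bookkeeping observation that, because each $E_n$ is closed, the leftover set is contained in a countable union of \emph{nowhere dense boundaries} $\partial E_n$. It is worth stressing that this argument uses only the completeness of $\mathbf{R}$ and the metrizability of $X$ (to speak of oscillations and of continuity); it does not invoke the Baire category theorem, so the hypothesis that $X$ be merely metrizable—rather than completely metrizable—suffices, with the understanding that \emph{comeager} means having meager complement.
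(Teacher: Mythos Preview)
Your argument is correct and is essentially the standard proof of the Baire classification theorem via oscillation and the Cauchy-type sets $E_n$. The paper, however, does not prove this result at all: its proof consists solely of a reference to \cite[Theorem 24.14]{MR1321597}. So there is no mathematical approach on the paper's side to compare against; you have supplied a complete, self-contained proof where the paper opted for a citation. Your closing remark is also apt: the argument really only needs metrizability, not completeness, since the conclusion is phrased as ``comeager'' (i.e., meager complement) rather than ``dense'', and no appeal to the Baire category theorem is made. One cosmetic point: when you write ``the measure-theoretic part'' you presumably mean the \emph{category}-theoretic part.
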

\begin{proof}
See \cite[Theorem 24.14]{MR1321597}.
%See \cite[p.32]{MR584443}. \textcolor{red}{Capi? Serve che $\Sigma_{x,A}(\mathcal{J})$ sia completo!}
\end{proof}
%%%%%%%%%%%%%%%%%%%%%%%%%%%%

Finally, we will use the well-known characterization of meager ideals due to Talagrand. 
\begin{thm}\label{prop:talagrand}
Let $\mathcal{I}$ be an ideal on $\mathbf{N}$. Then the following are equivalent: 
\begin{enumerate}[label={\rm (\textsc{m}\arabic{*})}]
\item \label{item:m1} $\mathcal{I}$ is meager\textup{;}
\item \label{item:m2} there exists $\sigma \in \Sigma$ such that $A \notin \mathcal{I}$ whenever $\mathbf{N} \cap [\sigma(n),\sigma(n+1)) \subseteq A$ for infinitely many $n \in \mathbf{N}$\textup{;}
%There exists a strictly increasing sequence $(\iota_n)$ of positive integers such that $A \notin \mathcal{I}$ whenever $\mathbf{N} \cap [\iota_n,\iota_{n+1}) \subseteq A$ for infinitely many $n \in \mathbf{N}$\textup{;}
\item \label{item:m3} $\mathcal{I}$ is $F_\sigma$-separated from the Fr\'{e}chet filter $\mathrm{Fin}^\star$ \textup{(}that is, there exists a sequence $(F_n)$ of closed sets such that $\mathcal{I}\subseteq \bigcup_n F_n$ and $F_n \cap \mathrm{Fin}^\star=\emptyset$ for all $n\in \mathbf{N}$\textup{)}\textup{.}
\end{enumerate}
%An ideal $\mathcal{I}$ on $\mathbf{N}$ is meager if and only if there exists $\sigma \in \Sigma$ such that $A \notin \mathcal{I}$ whenever $\mathbf{N} \cap [\sigma(n),\sigma(n+1)) \subseteq A$ for infinitely many $n \in \mathbf{N}$.
\end{thm}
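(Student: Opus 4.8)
This is the classical characterization of meager ideals (Talagrand; Jalali-Naini), and I would prove it by running the cycle \ref{item:m1}~$\Rightarrow$~\ref{item:m2}~$\Rightarrow$~\ref{item:m3}~$\Rightarrow$~\ref{item:m1}, in which the last two implications are routine and the first carries all the weight. Throughout I identify subsets of $\mathbf{N}$ with points of the Cantor space $\mathcal{P}(\mathbf{N})$, and I use that the cofinite sets form a dense subset of it (every finite pattern extends to a cofinite set).

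For \ref{item:m2}~$\Rightarrow$~\ref{item:m3}: writing $I_n:=\mathbf{N}\cap[\sigma(n),\sigma(n+1))$ for the intervals furnished by \ref{item:m2}, set $F_n:=\{X\subseteq\mathbf{N}:I_k\not\subseteq X\ \text{for every }k\ge n\}$. Since each $\{X:I_k\not\subseteq X\}=\bigcup_{j\in I_k}\{X:j\notin X\}$ is clopen, $F_n$ is closed; a cofinite set contains $I_k$ for all large $k$, so $F_n\cap\mathrm{Fin}^\star=\emptyset$; and \ref{item:m2} says precisely that every $X\in\mathcal{I}$ has $I_k\not\subseteq X$ for all but finitely many $k$, hence lies in some $F_n$, so $\mathcal{I}\subseteq\bigcup_n F_n$. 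For \ref{item:m3}~$\Rightarrow$~\ref{item:m1}: a closed set disjoint from the dense set $\mathrm{Fin}^\star$ has empty interior, hence is nowhere dense, so a sequence $(F_n)$ as in \ref{item:m3} exhibits $\mathcal{I}$ as a countable union of nowhere dense sets.

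The core is \ref{item:m1}~$\Rightarrow$~\ref{item:m2}. Starting from a cover $\mathcal{I}\subseteq\bigcup_n C_n$ with $C_n$ closed, nowhere dense and increasing, I would construct $1=a_0<a_1<\cdots$ by recursion, putting $I_n:=\mathbf{N}\cap[a_n,a_{n+1})$ and $\sigma(n):=a_n$. At stage $n$ one wants, for each of the finitely many initial patterns $s\subseteq\{1,\ldots,a_n-1\}$, an integer $a_{n+1}>a_n$ such that no $X$ with $X\cap\{1,\ldots,a_n-1\}=s$ and $[a_n,a_{n+1})\subseteq X$ lies in $C_n$; granted this, if $X\in\mathcal{I}$ then $X$ lies in some $C_m$, hence in every $C_k$ with $k\ge m$, so $I_k\subseteq X$ for some $k\ge m$ would place $X$ itself in the forbidden configuration for $C_k$ — a contradiction — and \ref{item:m2} follows. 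The obstacle, and the only genuinely delicate point, is that ``nowhere dense'' only guarantees that \emph{some} extension of $s$ escapes $C_n$, not the particular extension that fills in the whole next interval; to fix this one must first replace the $C_n$ by closed sets containing \emph{no} cofinite set, so that the cofinite witnesses $s\cup[a_n,\infty)$ automatically avoid them and closedness then supplies the required finite interval for every $s$ simultaneously. Carrying out that replacement — equivalently, proving \ref{item:m3} en route — is where the structure of $\mathcal{I}$, namely that it is downward closed and invariant under finite modifications, must be used in an essential way; this reduction is exactly the Jalali-Naini–Talagrand lemma, and I regard it as the main hurdle of the statement.
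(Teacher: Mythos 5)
Your two ``routine'' implications are correct and complete: \ref{item:m2}~$\Rightarrow$~\ref{item:m3} via the closed sets $F_n=\{X: I_k\not\subseteq X \text{ for all } k\ge n\}$, and \ref{item:m3}~$\Rightarrow$~\ref{item:m1} from the density of $\mathrm{Fin}^\star$ in $\mathcal{P}(\mathbf{N})$. Your reduction of \ref{item:m1}~$\Rightarrow$~\ref{item:m2} is also correctly diagnosed: once the cover consists of increasing closed sets meeting no cofinite set, compactness applied to the decreasing family $C_n\cap\{X: X\cap[1,m)=s\cup[a_n,m)\}$ (whose intersection is the single cofinite point $s\cup[a_n,\infty)\notin C_n$) yields the required $a_{n+1}$ uniformly in $s$. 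But the proof stops exactly at the theorem's actual content: you assert that the $C_n$ ``must be replaced'' by closed sets disjoint from $\mathrm{Fin}^\star$, observe that this is equivalent to proving \ref{item:m3}, call it the Jalali--Naini--Talagrand lemma, and do not prove it. As written, the hard implication \ref{item:m1}~$\Rightarrow$~\ref{item:m3} (equivalently \ref{item:m1}~$\Rightarrow$~\ref{item:m2}) is missing, and it does not follow from general topology: a meager cover of $\mathcal{I}$ by closed nowhere dense sets can perfectly well consist of sets containing cofinite points, and no naive modification (removing $\mathrm{Fin}^\star$, saturating under finite changes, etc.) stays closed or stays nowhere dense without further argument.

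The missing idea is the one place where heredity of $\mathcal{I}$ enters, and it is cleaner to use it to prove \ref{item:m1}~$\Rightarrow$~\ref{item:m2} directly rather than via your replacement step. From an increasing cover $\mathcal{I}\subseteq\bigcup_k D_k$ by closed nowhere dense sets, choose $n_0<n_1<\cdots$ so that for each $k$ and each $s\subseteq[1,n_k)$ there is a pattern $t_{k,s}\subseteq[n_k,n_{k+1})$ with $\{X: X\cap[1,n_{k+1})=s\cup t_{k,s}\}\cap D_k=\emptyset$; this needs only nowhere density, not avoidance of cofinite sets. If some $X\in\mathcal{I}$ contained $[n_k,n_{k+1})$ for infinitely many $k$, define $Y$ recursively by $Y\cap[n_k,n_{k+1}):=t_{k,\,Y\cap[1,n_k)}$ for those $k$ and $Y\cap[n_k,n_{k+1}):=\emptyset$ otherwise; then $Y\subseteq X$, so $Y\in\mathcal{I}$ by heredity, yet $Y$ escapes $D_k$ for infinitely many $k$, contradicting that $Y$ lies in some $D_m$ and the $D_k$ increase. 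That argument is what your proposal needs and lacks. For what it is worth, the paper itself does not prove Theorem~\ref{prop:talagrand} either: it cites \cite{MR4165727} and Talagrand's original paper \cite{MR579439}, so your write-up, while incomplete as a proof, already does more than the paper and correctly isolates the genuine hurdle.
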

\begin{proof}
See \cite[Proposition 3.1]{MR4165727} and \cite[Theorem 2.1]{MR579439}.
\end{proof}

At this point, we split the intermediate results into two cases: the first one assumes that $x$ is a bounded divergent sequence, the second one that $x$ is unbounded.

\subsection{Bounded case}

\begin{lem}\label{lem:continuity}
Fix two sequences $x \in \ell_\infty$ and $a \in \ell_1$. 
%a matrix $A \in (c_0, \ell_\infty)$. 
%Then, for each $n \in \mathbf{N}$, define the operator
%$$
%\textstyle 
%T_n: \Sigma \to \mathbf{R}: \sigma \mapsto \sum_k a_{n,k}x_{\sigma(k)}.
%$$
Then the  map 
$$
\textstyle 
\Sigma\to \mathbf{R}: \sigma\mapsto a\cdot \sigma(x)
$$
%$T: \Sigma\to \mathbf{R}$ defined by $\sigma\mapsto a\cdot \sigma(x)$ 
is uniformly continuous. 
\end{lem}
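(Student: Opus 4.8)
The plan is to work with an explicit compatible metric on $\Sigma \subseteq \mathbf{N}^{\mathbf{N}}$, say $d(\sigma,\tau) = 2^{-m}$ where $m$ is the least index at which $\sigma$ and $\tau$ disagree (and $d(\sigma,\sigma)=0$). First I would fix $\varepsilon > 0$ and, using $a \in \ell_1$, choose $N$ so large that $\sum_{k > N} |a_k| < \varepsilon / (4(\|x\|_\infty + 1))$. The point is that the tail of the series $a\cdot\sigma(x) = \sum_k a_k x_{\sigma(k)}$ contributes less than $\varepsilon/2$ in absolute value \emph{regardless} of $\sigma$, because $|x_{\sigma(k)}| \le \|x\|_\infty$ for every $\sigma \in \Sigma$ and every $k$. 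This uniform bound on the values of $x$ along any subsequence is exactly what boundedness of $x$ buys us.

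**Main estimate.** Next I would observe that the finite initial segment $\sum_{k \le N} a_k x_{\sigma(k)}$ depends only on $\sigma(1), \dots, \sigma(N)$. Hence if $d(\sigma,\tau) < 2^{-N}$, i.e. $\sigma$ and $\tau$ agree on the first $N$ coordinates, then $\sum_{k\le N} a_k x_{\sigma(k)} = \sum_{k\le N} a_k x_{\tau(k)}$, so these two initial segments cancel exactly. Combining this with the tail estimate applied to both $\sigma$ and $\tau$:
\[
|a\cdot\sigma(x) - a\cdot\tau(x)| \;\le\; \Bigl|\sum_{k>N} a_k x_{\sigma(k)}\Bigr| + \Bigl|\sum_{k>N} a_k x_{\tau(k)}\Bigr| \;<\; \frac{\varepsilon}{2} + \frac{\varepsilon}{2} \;=\; \varepsilon.
\]
Thus $\delta := 2^{-N}$ works uniformly — it depends on $\varepsilon$, on $\|x\|_\infty$, and on the tail behavior of $a$, but not on the particular points $\sigma, \tau$ — which is precisely uniform continuity.

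**Remarks on obstacles.** I do not expect a serious obstacle here; the only things to be careful about are (i) making sure the chosen metric genuinely induces the relative topology from $\mathbf{N}^{\mathbf{N}}$ (standard), and (ii) handling the degenerate case $\|x\|_\infty = 0$ or $a = 0$, where the map is constant and the claim is trivial — the "$+1$" in the denominator above already absorbs the first of these. One could alternatively phrase the argument without fixing a metric, by noting that the basic clopen neighborhoods of $\mathbf{N}^{\mathbf{N}}$ are determined by finitely many coordinates and that on such a neighborhood the oscillation of $\sigma \mapsto a\cdot\sigma(x)$ is controlled by the corresponding tail of $a$; but the metric formulation makes "uniform" most transparent. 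This lemma will presumably be used so that, after composing with a fixed row of a $(\mathrm{Fin},\mathcal{I})$-regular matrix $A$ (whose rows lie in $\ell_1$ by condition \ref{item:R1}), each coordinate $\sigma \mapsto (A\sigma(x))_n$ is continuous on $\Sigma$, putting $\sigma \mapsto A\sigma(x)$ in a position to be analyzed via the $\mathcal{I}$-Baire class machinery of Theorem~\ref{thm:filipowszuca}.
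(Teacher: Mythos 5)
Your proof is correct and follows the same basic strategy as the paper's (truncate using the $\ell_1$ tail of $a$ and bound the remainder by $\|x\|_\infty$), but the two arguments use different metrics on $\Sigma$, and since uniform continuity is a metric-dependent notion this is not a cosmetic difference. You metrize $\Sigma$ by the first coordinate of disagreement, so that $d(\sigma,\tau)<2^{-N}$ forces $\sigma(k)=\tau(k)$ for all $k\le N$, the first $N$ terms of $a\cdot\sigma(x)-a\cdot\tau(x)$ cancel exactly, and the tail estimate closes the argument. The paper instead uses $d(\sigma_1,\sigma_2)=\sum_{i\in\mathrm{Im}(\sigma_1)\triangle\mathrm{Im}(\sigma_2)}2^{-i}$, under which $d(\sigma_1,\sigma_2)<2^{-k_0}$ only guarantees $\mathrm{Im}(\sigma_1)\cap[1,k_0]=\mathrm{Im}(\sigma_2)\cap[1,k_0]$, hence $\sigma_1(k)=\sigma_2(k)$ only for $k\le j$ with $j=|\mathrm{Im}(\sigma_1)\cap[1,k_0]|$, which may be far smaller than $k_0$ (even $0$); the displayed cancellation down to $\sum_{k>k_0}a_k(x_{\sigma_1(k)}-x_{\sigma_2(k)})$ is therefore not justified, and in fact uniform continuity genuinely fails for that metric: take $a=(1,0,0,\dots)$, $x=(0,1,0,1,\dots)$, $\sigma_1=(m,m+1,m+2,\dots)$ and $\sigma_2=(m+1,m+2,\dots)$, so that the distance is $2^{-m}$ while the values $x_{\sigma_1(1)}$ and $x_{\sigma_2(1)}$ differ by $1$. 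So your choice of metric is the one under which the lemma is literally true as stated, and your argument is the cleaner one; with the paper's metric one still gets plain continuity at each fixed $\sigma$ (since $j\to\infty$ as $k_0\to\infty$ for fixed $\sigma$), which is all that Corollary~\ref{cor:continuitypartialmaps}, Remark~\ref{rmk:borelmeasurability} and Theorem~\ref{thm:conclusionmaincategorybounded} actually use. Your closing remark about how the lemma feeds into the $\mathcal{I}$-Baire class machinery is also accurate.
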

\begin{proof}
The claim holds trivially for $x=0$, hence suppose hereafter that $x\neq 0$, so that $\|x\|>0$. 
Note that the topology on $\Sigma$ is metrizable by $d: \Sigma\times \Sigma \to \mathbf{R}$ defined as
$$
%\textstyle 
\forall \sigma_1, \sigma_2 \in \Sigma, \quad d(\sigma_1,\sigma_2)=\sum_{i \in\, \mathrm{Im}(\sigma_1) \bigtriangleup \,\mathrm{Im}(\sigma_2)}\frac{1}{2^i}.
$$
%Fix $\varepsilon>0$ and define $\delta:=\varepsilon/\|x\|$. Since $a \in \ell_1$, there exists $k_0 \in \mathbf{N}$ such that $\sum_{k>k_0}|a_k|<\delta$. It follows that, if $d(\sigma_1,\sigma_2)<\delta$, then
Fix $\varepsilon>0$. Since $a \in \ell_1$, there exists $k_0 \in \mathbf{N}$ such that $\sum_{k> k_0}|a_k|<\frac{\varepsilon}{2\|x\|}$. 
At this point, define $\delta:=1/2^{k_0}$ and note that, if there exists $k\le k_0$ such that $k \in \mathrm{Im}(\sigma_1) \bigtriangleup \mathrm{Im}(\sigma_2)$ then $d(\sigma_1,\sigma_2) \ge 1/2^k\ge \delta$. Therefore, for each $\sigma_1,\sigma_2 \in \Sigma$ with $d(\sigma_1,\sigma_2)<\delta$, we obtain that the least element of 
$\mathrm{Im}(\sigma_1) \bigtriangleup \mathrm{Im}(\sigma_2)$ is greater than $k_0$, which implies that 
$$
\textstyle 
|a\cdot \sigma_1(x)-a\cdot \sigma_2(x)|
=\left|\sum_{k>k_0}a_k(x_{\sigma_1(k)}-x_{\sigma_2(k)}))\right|
%=|a\cdot (\sigma_1(x)-\sigma_2(x))|
%\le 2\|x\|\sum_{k \in \mathrm{Im}(\sigma_1)\, \bigtriangleup ,\mathrm{Im}(\sigma_2)}|a_k|
\le 2\|x\|\sum_{k>k_0}|a_k|<\varepsilon.
$$
This concludes the proof.
\end{proof}

%%%%%%%%%%%%%%%%%%%%%%%%

\begin{cor}\label{cor:continuitypartialmaps}
Fix a matrix $A \in (c_0, \ell_\infty)$ and a sequence $x \in \ell_\infty$. Then the map 
$$
\textstyle 
\Sigma \to \mathbf{R}:\sigma \mapsto \sum_k a_{n,k}x_{\sigma(k)}
$$
is uniformly continuous for each $n \in \mathbf{N}$.
\end{cor}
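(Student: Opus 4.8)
The plan is to reduce the statement directly to Lemma \ref{lem:continuity}. First I would observe that membership $A \in (c_0,\ell_\infty)$ forces each row of $A$ to be absolutely summable. Indeed, $A \in (c_0,\ell_\infty)$ means in particular that $x \in \omega_A$ for every $x \in c_0$, i.e., the series $\sum_k a_{n,k} x_k$ converges for each $n \in \mathbf{N}$ and each $x \in c_0$; since the continuous dual of $c_0$ is isometrically $\ell_1$, this is equivalent to $(a_{n,k})_k \in \ell_1$ for every $n$. (Alternatively, one may invoke the classical characterization of the matrix class $(c_0,\ell_\infty)$, which in particular yields $\sup_n \sum_k |a_{n,k}| < \infty$, a fortiori $(a_{n,k})_k \in \ell_1$ for each fixed $n$.)

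Next, fix $n \in \mathbf{N}$ and set $a := (a_{n,k})_k$, so that $a \in \ell_1$ by the previous paragraph and, by hypothesis, $x \in \ell_\infty$. Then the map under consideration is exactly
$$
\textstyle
\Sigma \to \mathbf{R}: \sigma \mapsto \sum_k a_{n,k} x_{\sigma(k)} = a \cdot \sigma(x),
$$
which is the map treated in Lemma \ref{lem:continuity}. Applying that lemma verbatim gives that this map is uniformly continuous, and since $n$ was arbitrary, this proves the corollary.

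I expect no genuine obstacle here: the only point requiring a word of justification is the passage from $A \in (c_0,\ell_\infty)$ to the $\ell_1$-summability of the individual rows, which rests on the standard duality $c_0^\star = \ell_1$ (or, if one prefers, on the known necessary conditions for the matrix class $(c_0,\ell_\infty)$). Everything else is a direct invocation of Lemma \ref{lem:continuity}.
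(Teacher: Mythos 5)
Your proof is correct and follows essentially the same route as the paper: the paper likewise invokes the classical characterization of $(c_0,\ell_\infty)$ (namely $\sup_n\sum_k|a_{n,k}|<\infty$, so each row lies in $\ell_1$) and then applies Lemma \ref{lem:continuity} directly. Your additional duality argument via $c_0^\star=\ell_1$ is a valid alternative justification of the same fact.
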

\begin{proof}
Thanks to \cite[Theorem 2.3.5]{MR1817226}, $A$ belongs to $(c_0, \ell_\infty)$ if and only if $\sup_n \sum_k |a_{n,k}|<\infty$. The claim follows by Lemma \ref{lem:continuity}.
\end{proof}

%%%%%%%%%%%%%%%%%%

\begin{lem}\label{lem:discontinuousverythwre}
Fix a bounded divergent sequence $x \in \ell_\infty\setminus c$, an ideal $\mathcal{I}$ on $\mathbf{N}$, and $(\mathrm{Fin}, \mathcal{I})$-matrix $A$. 
Then $\Sigma_{x,A}(\mathcal{I})$ is dense and the map 
\begin{equation}\label{eq:mapTj}
\textstyle 
T: \Sigma_{x,A}(\mathcal{I}) \to \mathbf{R}: \sigma \mapsto \mathcal{I}\text{-}\lim_n \sum_k a_{n,k}x_{\sigma(k)}
\end{equation}
is everywhere discontinuous.
\end{lem}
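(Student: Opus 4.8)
The plan is to establish the two assertions separately, with the density claim being a short warm-up and the everywhere-discontinuity being the substantive part. For density: given any basic open set in $\Sigma$, determined by a finite initial segment, I want to extend it to some $\sigma$ whose image eventually lists a tail of $\mathbf{N}$ on which $x$ is essentially constant — but since $x$ is merely bounded and divergent, not eventually constant, the right move is instead to note that for a $(\mathrm{Fin},\mathcal{I})$-matrix $A$ we have $A\in(\ell_\infty,\ell_\infty)$ by condition \ref{item:R1} of Theorem~\ref{thm:connotleofinJ}, so $A\sigma(x)$ is always a bounded sequence; to force it into $c(\mathcal{I})$ it suffices to force it into $c$, which we can do by choosing $\sigma$ so that, past the prescribed finite segment, $\sigma(x)$ is a constant sequence equal to some value taken infinitely often by $x$ (such a value exists since $x$ is bounded, by Bolzano--Weierstrass applied to a cluster point and passing to a subsequence on which $x$ is within $1/m$ of it — more carefully one fixes a cluster point $\ell$ and builds $\sigma$ so that $|x_{\sigma(k)}-\ell|\le 2^{-k}$, making $\sigma(x)\to \ell$, hence $A\sigma(x)\to\ell$ by regularity). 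This produces an element of $\Sigma_{x,A}(\mathcal{I})$ in the given basic open set, proving density.

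For the discontinuity of $T$: the key point is that $x$ divergent and bounded means $x$ has two distinct cluster points, say $\alpha<\beta$. Fix any $\sigma\in\Sigma_{x,A}(\mathcal{I})$ and any $\delta>0$; I will exhibit $\sigma'\in\Sigma_{x,A}(\mathcal{I})$ with $d(\sigma,\sigma')<\delta$ but $|T(\sigma)-T(\sigma')|$ bounded below by a fixed positive constant independent of $\delta$, which forces $T$ to be discontinuous at $\sigma$. To build $\sigma'$: agree with $\sigma$ on a long enough initial segment so that $d(\sigma,\sigma')<\delta$ is guaranteed (this uses the explicit metric $d$ from Lemma~\ref{lem:continuity}, so agreement on $\{1,\dots,N\}$ for large $N$ suffices), and then on the tail choose $\sigma'$ so that $\sigma'(x)$ converges (ordinarily) to $\beta$ — possible exactly as in the density argument. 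Then $\sigma'\in\Sigma_{x,A}(\mathcal{I})$ and $T(\sigma')$ is forced to equal $\beta$ by $(\mathrm{Fin},\mathcal{I})$-regularity: $A$ maps the convergent sequence $\sigma'(x)$ to an $\mathcal{I}$-convergent sequence with the same limit, so $\mathcal{I}\text{-}\lim_n\sum_k a_{n,k}x_{\sigma'(k)}=\beta$. Symmetrically, starting from the \emph{same} $\sigma$ I can instead build $\sigma''$ agreeing with $\sigma$ on $\{1,\dots,N\}$ whose tail drives $\sigma''(x)\to\alpha$, giving $T(\sigma'')=\alpha$. Since $\sigma'$ and $\sigma''$ both lie within $\delta$ of $\sigma$ and $|T(\sigma')-T(\sigma'')|=\beta-\alpha>0$, at least one of them is at $T$-distance $\ge(\beta-\alpha)/2$ from $T(\sigma)$, and $T$ cannot be continuous at $\sigma$. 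As $\sigma$ was arbitrary in the domain, $T$ is everywhere discontinuous.

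The main obstacle is the bookkeeping in the tail-construction: I must check that "agree with $\sigma$ on a long initial segment, then follow a subsequence hugging the cluster point $\beta$" actually yields a strictly increasing $\sigma'\in\Sigma$ whose image differs from $\mathrm{Im}(\sigma)$ only in elements $>N$, so that $d(\sigma,\sigma')\le\sum_{i>N}2^{-i}<\delta$. This is routine but requires care: after fixing $\sigma\restriction\{1,\dots,N\}$, set $M:=\sigma(N)$ and then pick indices $m_{N+1}<m_{N+2}<\cdots$ all exceeding $M$ with $|x_{m_k}-\beta|\le 2^{-k}$ (available since $\beta$ is a cluster point of $x$), and define $\sigma'(j)=\sigma(j)$ for $j\le N$ and $\sigma'(j)=m_j$ for $j>N$; strict monotonicity and the image-difference bound are then immediate, and $\sigma'(x)\to\beta$ gives $A\sigma'(x)\to\beta$ by $(\mathrm{Fin},\mathcal{I})$-regularity (note we only need $A$ to act correctly on genuinely convergent, hence $\mathrm{Fin}$-convergent bounded, sequences, which is exactly what $(\mathrm{Fin},\mathcal{I})$-regularity supplies). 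Everything else — the metric estimate, Bolzano--Weierstrass for the existence of two cluster points, and the elementary pigeonhole giving $|T(\sigma')-T(\sigma)|\ge(\beta-\alpha)/2$ or $|T(\sigma'')-T(\sigma)|\ge(\beta-\alpha)/2$ — is standard.
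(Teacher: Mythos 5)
Your proposal is correct and follows essentially the same route as the paper: both arguments exploit the two distinct cluster points $\liminf x$ and $\limsup x$ to plant, inside every basic open set, subsequences $\sigma$ with $\sigma(x)$ convergent (hence $A\sigma(x)$ $\mathcal{I}$-convergent with the same limit, by $(\mathrm{Fin},\mathcal{I})$-regularity), which gives density and simultaneously forces the oscillation of $T$ on every ball to be at least the gap between the two cluster points. The paper states this more tersely via $\sup_{\sigma,\sigma'\in B\cap\Sigma_{x,A}(\mathcal{I})}|T(\sigma)-T(\sigma')|\ge \limsup x-\liminf x$, while you spell out the tail construction and the pigeonhole step, but the content is the same.
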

\begin{proof}
Let $B\subseteq \Sigma$ an arbitrary open ball. Observe that there exist $\sigma_1,\sigma_2\in B$ such that $\lim \sigma_1(x)=\alpha$ and $\lim \sigma_2(x)=\beta$, where $\alpha:=\limsup_n x_n$ and $\beta:=\liminf_n x_n$ are finite and distinct since $x \in \ell_\infty\setminus c$. 
Considering that $A$ is $(\mathrm{Fin}, \mathcal{I})$-regular, we obtain that $\alpha=\mathcal{I}\text{-}\lim A\sigma_1(x)$ and $\beta=\mathcal{I}\text{-}\lim A\sigma_2(x)$. 
In particular, $B\cap \Sigma_{x,A}(\mathcal{I})\neq \emptyset$, hence $\Sigma_{x,A}(\mathcal{I})$ is dense. 

In addition, $\sup_{\sigma, \sigma^\prime \in B \cap \Sigma_{x,A}(\mathcal{I})}|T(\sigma)-T(\sigma^\prime)|\ge \alpha-\beta$ for every open ball $B\subseteq \Sigma$, therefore $T$ cannot be continuous at any point of $\Sigma_{x,A}(\mathcal{I})$. 
\end{proof}

%%%%%%%%%%%%%%%%%%%%%%%%

\begin{rmk}\label{rmk:borelmeasurability}
With the same hypotheses of Lemma \ref{lem:discontinuousverythwre}, assume that $\mathcal{I}$ is a Borel ideal. 
Then $\Sigma_{x,A}(\mathcal{I})$ is Borel and the map $T$ defined in \eqref{eq:mapTj} is Borel measurable. 
For, note that necessarily $A \in (c_0, \ell_\infty)$. Hence the map $T$ is the $\mathcal{I}$-pointwise limit of the sequence of functions $T_n: \sigma \mapsto \sum_k a_{n,k}x_k$ restricted to $\Sigma_{x,A}(\mathcal{I})$, which are continuous for each $n \in \mathbf{N}$ thanks to Corollary \ref{cor:continuitypartialmaps}. In particular, each of them is Borel measurable. The claim follows by \cite[Lemma 1 and Lemma 2]{MR2390885}. 
However, as  remarked in \cite[Example 1]{MR2390885}, some assumptions on $\mathcal{I}$ are needed to ensure that the $\mathcal{I}$-pointwise limit of measurable functions is still measurable.
\end{rmk}

\begin{thm}\label{thm:conclusionmaincategorybounded}
Fix a bounded divergent sequence $x \in \ell_\infty\setminus c$, an ideal $\mathcal{I}$ on $\mathbf{N}$ such that $\mathcal{I}^\star$ is $\omega$-diagonalizable by $\mathcal{I}^\star$-universal sets, and a $(\mathrm{Fin}, \mathcal{I})$-regular matrix $A$. 

Then $\Sigma_{x,A}(\mathcal{I})$ is meager. 
\end{thm}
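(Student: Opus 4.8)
The plan is to combine Lemma~\ref{lem:discontinuousverythwre} with the Baire-category machinery set up in the Preliminaries. By Lemma~\ref{lem:discontinuousverythwre} we already know that $\Sigma_{x,A}(\mathcal{I})$ is a \emph{dense} subset of $\Sigma$ on which the associated limit map
$$
\textstyle
T: \Sigma_{x,A}(\mathcal{I}) \to \mathbf{R}: \sigma \mapsto \mathcal{I}\text{-}\lim_n \sum_k a_{n,k}x_{\sigma(k)}
$$
is everywhere discontinuous. The strategy is to argue by contradiction: if $\Sigma_{x,A}(\mathcal{I})$ were \emph{not} meager, then, being dense, it would be a non-meager subspace of the Polish space $\Sigma$, and in particular it would itself be a Baire space (indeed a dense $G_\delta$-like set in the sense needed). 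The key point is that, by Corollary~\ref{cor:continuitypartialmaps}, the partial sums $T_n:\sigma\mapsto\sum_k a_{n,k}x_{\sigma(k)}$ are continuous on all of $\Sigma$ (note $A\in(c_0,\ell_\infty)$ automatically since $A$ is $(\mathrm{Fin},\mathcal{I})$-regular, hence satisfies \ref{item:R1}), so their restrictions to $\Sigma_{x,A}(\mathcal{I})$ are continuous, and $T$ is by construction the $\mathcal{I}$-pointwise limit of this sequence of continuous functions. Therefore $T\in\mathcal{B}_1^{\mathcal{I}}(\Sigma_{x,A}(\mathcal{I}))$.

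Now I would invoke Theorem~\ref{thm:filipowszuca}: since $\mathcal{I}^\star$ is $\omega$-diagonalizable by $\mathcal{I}^\star$-universal sets, and $\Sigma_{x,A}(\mathcal{I})$, being a subspace of the metrizable space $\Sigma$, is perfectly normal, we get $\mathcal{B}_1^{\mathcal{I}}(\Sigma_{x,A}(\mathcal{I}))=\mathcal{B}_1(\Sigma_{x,A}(\mathcal{I}))$, i.e.\ $T$ is an \emph{ordinary} Baire one function on the space $Y:=\Sigma_{x,A}(\mathcal{I})$. Then, by the Baire classification theorem (Theorem~\ref{thm:baireclassification}) applied to the metrizable space $Y$, the set of points of continuity of $T$ is comeager in $Y$; in particular it is nonempty provided $Y$ is non-meager in itself (which is exactly the hypothesis-for-contradiction, since a non-meager subset of a Polish space is non-meager in itself). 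This directly contradicts the conclusion of Lemma~\ref{lem:discontinuousverythwre} that $T$ is discontinuous at every point of $Y$. Hence $\Sigma_{x,A}(\mathcal{I})$ must be meager, as claimed.

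The one subtle point — and the step I expect to require the most care — is the transfer of the ``not meager'' hypothesis into a usable Baire-space statement for the \emph{subspace} $Y=\Sigma_{x,A}(\mathcal{I})$, so that Theorem~\ref{thm:baireclassification} yields a genuine point of continuity. One has to be careful that ``meager'' for $\Sigma_{x,A}(\mathcal{I})$ is meant as a subset of $\Sigma$ (as in the statement), and to see that a non-meager subset of a Polish space, while not necessarily Baire as a subspace, does contain a dense $G_\delta$ of $\Sigma$ restricted to some open set, or alternatively: one localizes to an open ball $B$ where $Y$ is non-meager and comeager, replaces $Y$ by $Y\cap B$, and observes that the discontinuity statement of Lemma~\ref{lem:discontinuousverythwre} holds relative to \emph{every} open ball, so no generality is lost. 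Concretely, I would phrase it as: suppose $\Sigma_{x,A}(\mathcal{I})$ is non-meager; then there is an open ball $B$ in which it is comeager, hence $Y\cap B$ contains a dense $G_\delta$ subset $G$ of $B$, which is Polish; restricting $T$ to $G$ keeps it in $\mathcal{B}_1(G)$ and keeps it everywhere discontinuous (again because Lemma~\ref{lem:discontinuousverythwre} works ball-by-ball), so Theorem~\ref{thm:baireclassification} furnishes a continuity point of $T|_G$ — contradiction.

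Once that contradiction is reached the proof is complete. Everything else is bookkeeping: checking $A\in(c_0,\ell_\infty)$ (immediate from \ref{item:R1} in Theorem~\ref{thm:connotleofinJ}), checking perfect normality of metrizable subspaces (standard), and quoting the Borel-measurability of $T$ from Remark~\ref{rmk:borelmeasurability} only if one wants to make the $G_\delta$-localization fully explicit — though strictly it is not needed, since Theorem~\ref{thm:filipowszuca} already delivers $T\in\mathcal{B}_1$ without any appeal to Borel ideals. The heart of the argument is thus the chain
$$
T\in \mathcal{B}_1^{\mathcal{I}}(Y)\ \xrightarrow{\ \mathrm{Thm.}~\ref{thm:filipowszuca}\ }\ T\in\mathcal{B}_1(Y)\ \xrightarrow{\ \mathrm{Thm.}~\ref{thm:baireclassification}\ }\ T\ \text{has a continuity point}\ \xrightarrow{\ \mathrm{Lem.}~\ref{lem:discontinuousverythwre}\ }\ \bot,
$$
valid whenever $Y$ is non-meager.
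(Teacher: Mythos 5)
Your core argument is the same as the paper's and, as written in your second paragraph, it is already complete: $T\in\mathcal{B}_1^{\mathcal{I}}(Y)$ by Corollary~\ref{cor:continuitypartialmaps}, hence $T\in\mathcal{B}_1(Y)$ by Theorem~\ref{thm:filipowszuca}, hence the continuity points of $T$ are comeager in $Y$ by Theorem~\ref{thm:baireclassification}; since Lemma~\ref{lem:discontinuousverythwre} says there are none, $Y$ is meager in itself, and (being dense) meager in $\Sigma$. The paper runs exactly this chain directly rather than by contradiction, and your observation that ``non-meager in $\Sigma$ implies non-meager in itself'' is the correct (contrapositive) form of the same elementary fact.

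However, the ``concrete'' localization you say you would actually write down is both unnecessary and genuinely gappy, and you should drop it. First, to pass from ``$Y$ is non-meager'' to ``$Y$ is comeager in some open ball $B$'' you need $Y$ to have the Baire property; under the hypotheses of this theorem $\mathcal{I}$ is not assumed Borel, so Remark~\ref{rmk:borelmeasurability} is not available, and a non-meager set without the Baire property (e.g.\ a Bernstein set) need not be comeager in any open set. Second, after replacing $Y\cap B$ by a dense $G_\delta$ set $G\subseteq B$, the claim that $T|_G$ is still everywhere discontinuous does not follow from Lemma~\ref{lem:discontinuousverythwre}: the oscillation witnesses $\sigma_1,\sigma_2$ produced in that lemma (subsequences converging to $\limsup x$ and $\liminf x$) lie in $Y$ but need not lie in $G$, and indeed the set of such witnesses is itself meager, so $G$ can be chosen to avoid them entirely. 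Neither repair is needed: Theorem~\ref{thm:baireclassification}, as stated in the paper, applies to an arbitrary metrizable space, so you may apply it to $Y$ itself with no completeness, no Baire property, and no passage to a $G_\delta$ subset.
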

\begin{proof}
%Let us assume for the sake of contradiction that 
%$X:=\Sigma_{x,A}(\mathcal{V})$ is not meager in $\Sigma$. 
Set $X:=\Sigma_{x,A}(\mathcal{I})$ and let $T$ be the map defined in \eqref{eq:mapTj}. 
Thanks to Corollary \ref{cor:continuitypartialmaps},  $T \in \mathcal{B}_1^{\mathcal{I}}(X)$. 
Since $X$ is metrizable and $\mathcal{I}^\star$ is $\omega$-diagonalizable by $\mathcal{I}^\star$-universal sets, then $T \in \mathcal{B}_1(X)$ by Theorem \ref{thm:filipowszuca}. 
At this point, it follows by Theorem \ref{thm:baireclassification} that the set of continuity points of $T$ is a comeager subset of $X$. 
Hence, by Lemma \ref{lem:discontinuousverythwre}, $X$ is a dense subset of $\Sigma$ which is meager in itself. 
We conclude that $X$ is meager in $\Sigma$. 
%Pensa al motivo.
\end{proof}

%\textcolor{red}{Mettere $\mathcal{I}=\mathrm{Fin}$ e $\mathcal{J}=\mathcal{V}$ any Borel ideal which does not contain an isomorphic copy of $\mathrm{Fin}\times \mathrm{Fin}$. In particular, it holds for any $F_{\sigma\delta}$-ideal. Hence it holds for $\mathcal{Z}$, explicit corollary.}

%\bigskip

\subsection{Unbounded case.}

%\textcolor{red}{[Define $\ell_\infty(\mathcal{I})$, $\ell_1$, $c_{00}$]}

\begin{thm}\label{lem:domaindefinition}
Fix sequences $x \in \omega\setminus \ell_\infty$ and $a \in \omega\setminus c_{00}$. Then 
$$
\textstyle 
\left\{\sigma \in \Sigma: \left(\sum_{k\le n} a_k x_{\sigma(k)}\right) \in \ell_\infty\right\}
$$ 
is meager. 
 %in $\Sigma$. 
%\textcolor{red}{Add definitions spaces}
\end{thm}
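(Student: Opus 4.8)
The goal is to show that the set $\mathcal{M}:=\{\sigma \in \Sigma: (\sum_{k\le n} a_k x_{\sigma(k)})_n \in \ell_\infty\}$ is meager. Since $\Sigma$ is a Polish space (being a $G_\delta$ in $\mathbf{N}^{\mathbf{N}}$), it suffices to exhibit a meager $F_\sigma$-set containing $\mathcal{M}$, or directly to write $\mathcal{M}$ as a countable union of nowhere dense sets. The natural decomposition is $\mathcal{M}=\bigcup_{m\in\mathbf{N}} \mathcal{M}_m$, where $\mathcal{M}_m:=\{\sigma \in \Sigma: \sup_n |\sum_{k\le n} a_k x_{\sigma(k)}|\le m\}$, so the plan reduces to proving that each $\mathcal{M}_m$ is nowhere dense (in fact I expect each $\mathcal{M}_m$ to be closed, so it is enough to show it has empty interior).

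\textbf{Key steps.} First I would fix $m$ and an arbitrary basic open set $B\subseteq \Sigma$, determined by a finite increasing partial sequence, say $B$ consists of all $\sigma\in\Sigma$ whose first $N$ values are prescribed to be $t_1<\cdots<t_N$. I must produce some $\sigma^\star \in B$ with $\sup_n|\sum_{k\le n} a_k x_{\sigma^\star(k)}|>m$. The idea is to use the freedom in choosing $\sigma^\star(k)$ for $k>N$. Since $x\notin\ell_\infty$, the values $x_j$ are unbounded; since $a\notin c_{00}$, there are infinitely many $k$ with $a_k\neq 0$. Pick one such index $k^\star>N$ with $a_{k^\star}\neq 0$. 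For the partial sum up to $k^\star$, the terms with $k<k^\star$ (beyond the prescribed block) can be filled in by some legal choice contributing a bounded amount $S$ that I can bound a priori once those finitely many indices are chosen; then I choose $\sigma^\star(k^\star)$ to be an index $j$ (larger than $\sigma^\star(k^\star-1)$) with $|x_j|$ so large that $|a_{k^\star}x_j| > m + |S| + (\text{the contribution of the prescribed block})$, forcing the partial sum at $n=k^\star$ to exceed $m$ in absolute value. Finally I complete $\sigma^\star$ to a full strictly increasing function arbitrarily. This $\sigma^\star$ lies in $B$ but not in $\mathcal{M}_m$, so $\mathcal{M}_m$ has empty interior; being closed (the defining condition $\sup_n|\cdots|\le m$ is a countable intersection of conditions each depending on finitely many coordinates of $\sigma$, hence closed), it is nowhere dense.

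\textbf{Main obstacle.} The delicate point is the order of quantifiers in the construction: I need the "filler" values $\sigma^\star(N+1),\dots,\sigma^\star(k^\star-1)$ to be chosen \emph{before} I know how large $|x_j|$ must be, so that the bound $|S|$ on their contribution is a fixed finite number, and only then do I exploit the unboundedness of $x$ to overwhelm it with a single large term $a_{k^\star}x_{\sigma^\star(k^\star)}$. One must also ensure such a large-value index $j$ can be taken strictly above $\sigma^\star(k^\star-1)$ — this is where $x\notin\ell_\infty$ is used in its full strength: the set $\{j: |x_j|\ge M\}$ is infinite for every $M$, hence meets every tail. A minor subtlety is that $a_{k^\star}$ could be small, but it is a fixed nonzero constant, so dividing the required largeness threshold by $|a_{k^\star}|$ causes no trouble. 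Closedness of $\mathcal{M}_m$ should be checked carefully: if $\sigma_j\to\sigma$ in $\Sigma$ then for each fixed $n$ eventually $\sigma_j$ agrees with $\sigma$ on $\{1,\dots,n\}$, so $\sum_{k\le n}a_k x_{\sigma_j(k)}=\sum_{k\le n}a_k x_{\sigma(k)}$ eventually, whence the bound $\le m$ passes to the limit for every $n$.
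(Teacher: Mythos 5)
Your proposal is correct and follows essentially the same route as the paper: decompose the set as $\bigcup_m \mathcal{M}_m$ with each $\mathcal{M}_m$ closed, and show each has empty interior by extending any prescribed finite initial segment with one index $k^\star$ where $a_{k^\star}\neq 0$ and a value of $x$ large enough to push the partial sum past $m$. The only (immaterial) difference is that the paper takes $k^\star$ to be the \emph{minimal} index beyond the prescribed block with $a_{k^\star}\neq 0$, so the filler terms vanish outright, whereas you bound their contribution $S$ explicitly after fixing the fillers first --- which is exactly the quantifier order needed.
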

\begin{proof}
Let $E$ be the claimed set. Observe that $E=\bigcup_{m}E_m$, where 
$$
\textstyle 
\forall m \in \mathbf{N}, \quad E_m:=\left\{\sigma \in \Sigma: \left|\sum_{k\le n} a_k x_{\sigma(k)}\right|\le m \text{ for all }n\right\}.
$$
We are going each $E_{m}$ is nowhere dense. Note that each $E_m$ is closed. Thus, let us assume for the sake of contradiction that there exists $m_0$ such that $E_{m_0}$ has nonempty interior, hence there exist positive integers $t_1<\cdots<t_j$ such that $\sigma \in E_{m_0}$ whenever $\sigma(s)=t_s$ for all $s=1,\ldots,j$. 
Since $a \notin c_{00}$, there exists a minimal integer $i_0\ge j+1$ such that $a_{i_0}\neq 0$.  
In addition, since $x \notin \ell_\infty$, there exists an integer $t_0 \ge t_j+i_0$ such that
\begin{equation}\label{eq:contradictionabsolutevalues}
\textstyle 
%\forall t \in T,\quad 
|a_{i_0}x_{t_0}|\ge m_0+1+\left|\sum_{k\le j} a_k x_{t_k}\right|.
\end{equation}

Finally, define  $\sigma_0: \mathbf{N}\to \mathbf{N}$ as follows:
\begin{enumerate}[label=(\roman*)]
\item \label{item:sigma1} $\sigma_0(s)=t_s$ for all $s=1,\ldots,j$;
\item \label{item:sigma2} $\sigma_0(j+s)=t_j+s$ for all $s=1,\ldots, i_0-j-1$;
\item \label{item:sigma3} $\sigma_0(i_0+s)=t_0+s$ for all integers $s\ge 0$.
\end{enumerate}
Note that $\sigma_0$ is strictly increasing, hence $\sigma_0 \in \Sigma$. Moreover, thanks to \ref{item:sigma1}, $\sigma_0 \in E_{m_0}$. On the other hand, by \ref{item:sigma3} we have $\sigma_0(i_0)=t_0$. And by the minimality of $i_0$ we get $a_{j+s}=0$ for all $s=1,\ldots,i_0-j-1$. Putting everything together with \eqref{eq:contradictionabsolutevalues}, we obtain that 
$$
\textstyle 
\left|\sum_{k\le i_0}a_k x_{\sigma_0(k)}\right|=\left|a_{i_0}x_{t_0}+\sum_{k\le j}a_kx_{t_k}\right|\ge m_0+1,
$$
which would imply that $\sigma_0 \notin E_{m_0}$. This contradiction concludes the proof. 
%Therefore $E(a,x)$ is a countable union of nowhere dense sets.
\end{proof}

A matrix $A$ is said \emph{row finite} if every row is in $c_{00}$, that is, for each $n$ there exists $k_0 \in \mathbf{N}$ such that $a_{n,k}=0$ for all $k\ge k_0$.
\begin{cor}\label{cor:unboundedcase1}
Let $A$ be a matrix which is not row finite, and fix $x \in \omega \setminus \ell_\infty$. 

Then 
$
\textstyle 
\{\sigma \in \Sigma: \sigma(x) \in \omega_A\}
$ 
is meager.
\end{cor}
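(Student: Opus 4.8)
The plan is to reduce Corollary \ref{cor:unboundedcase1} to Theorem \ref{lem:domaindefinition} by observing that, for a matrix $A$ which is not row finite, there is some fixed row index $n_0$ whose row $a^{(n_0)} := (a_{n_0,k})_k$ fails to be eventually zero, i.e.\ $a^{(n_0)} \in \omega \setminus c_{00}$. Since $x \in \omega \setminus \ell_\infty$ by hypothesis, the pair $(x, a^{(n_0)})$ satisfies exactly the hypotheses of Theorem \ref{lem:domaindefinition}.

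The key steps, in order, are as follows. First, unpack the definition of $\omega_A$: for $\sigma \in \Sigma$, membership $\sigma(x) \in \omega_A$ requires in particular that the series $\sum_k a_{n_0,k} x_{\sigma(k)}$ converges. A necessary condition for convergence of a real series is that its sequence of partial sums $\big(\sum_{k \le n} a_{n_0,k} x_{\sigma(k)}\big)_n$ is bounded, hence lies in $\ell_\infty$. Therefore
$$
\{\sigma \in \Sigma : \sigma(x) \in \omega_A\} \subseteq \Big\{\sigma \in \Sigma : \Big(\textstyle\sum_{k \le n} a_{n_0,k} x_{\sigma(k)}\Big)_n \in \ell_\infty\Big\}.
$$
Second, apply Theorem \ref{lem:domaindefinition} with the sequences $x$ and $a := a^{(n_0)}$ (legitimate precisely because $a^{(n_0)} \notin c_{00}$ and $x \notin \ell_\infty$): the right-hand set above is meager. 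Third, since a subset of a meager set is meager, conclude that $\{\sigma \in \Sigma : \sigma(x) \in \omega_A\}$ is meager, as claimed.

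I do not expect any serious obstacle here — the corollary is essentially a packaging of Theorem \ref{lem:domaindefinition}. The only mild point of care is making sure the right ``the '' logical direction is used: it is the \emph{partial sums being unbounded} that forces divergence, so one wants the containment $\{\sigma : \sigma(x) \in \omega_A\} \subseteq \{\sigma : \text{partial sums of row } n_0 \text{ bounded}\}$, and then meagerness transfers downward to subsets. One should also remember that the choice of $n_0$ is guaranteed by the negation of ``row finite'': ``not row finite'' means there exists $n_0$ with $a_{n_0,k} \neq 0$ for infinitely many $k$, which is exactly $a^{(n_0)} \in \omega \setminus c_{00}$. With these observations the proof is two or three lines long.
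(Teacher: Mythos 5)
Your proposal is correct and is essentially identical to the paper's own proof: both select a row $r_0$ (your $n_0$) that is not eventually zero, note that $\{\sigma : \sigma(x) \in \omega_A\}$ is contained in the set of $\sigma$ for which the partial sums $\bigl(\sum_{k\le n} a_{r_0,k} x_{\sigma(k)}\bigr)_n$ are bounded, and then invoke Theorem \ref{lem:domaindefinition} together with the fact that subsets of meager sets are meager. No gaps; nothing further to add.
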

\begin{proof}
By hypothesis, there exists $r_0 \in \mathbf{N}$ such that $a_{r_0,k}\neq 0$ for infinitely many $k$. 
Hence
%At this point, note that
\begin{displaymath}
\begin{split}
\textstyle
\{\sigma \in \Sigma: \sigma(x) \in \omega_A\}&\textstyle = \bigcap_r \left\{\sigma \in \Sigma: \left(\sum_{k\le n}a_{r,k}x_{\sigma(k)}\right) \in c\right\}\\
%%&\textstyle 
%%%\text{ }\hspace{15mm}
%%\subseteq 
%%\left\{\sigma \in \Sigma: \left(\sum_{k\le n}a_{r_0,k}x_{\sigma(k)}\right) \in c\right\} \\
&\textstyle 
%\text{ }\hspace{30mm}
\subseteq 
\left\{\sigma \in \Sigma: \left(\sum_{k\le n}a_{r_0,k}x_{\sigma(k)}\right) \in \ell_\infty\right\}.
\end{split}
\end{displaymath}
The conclusion follows by Theorem \ref{lem:domaindefinition}.
%It follows by Lemma \ref{lem:domaindefinition} that $\left\{\sigma \in \Sigma: \left(\sum_{k\le n}a_{n_0,k}x_{\sigma(k)}\right) \in c\right\}$ is meager. 
\end{proof}

\begin{thm}\label{thm:unboundedrowfinite}
Let $\mathcal{I}$ be a meager ideal on $\mathbf{N}$. Fix an unbounded sequence $x \in \omega \setminus \ell_\infty$ and a row finite matrix $A$ such that 
\begin{equation}\label{eqitem:U1}
\textstyle 
\forall w \in \mathbf{N}, \quad 
Z_w:=\{n \in \mathbf{N}: a_{n,k}=0 \text{ for all }k\ge w\} \in \mathcal{I}.
\end{equation}
%$\{n \in \mathbf{N}: a_{n,k}=0 \text{ for all }k\ge w\} \in \mathcal{I}$ for all $w \in \mathbf{N}$. 
%\begin{enumerate}[label={\rm (\textsc{U}\arabic{*})}]
%[label=\textup{(}\roman*\textup{)}]
%\item for all $w \in \mathbf{N}$, there exist $n,k \in \mathbf{N}$ for which $a_{n,k}\neq 0$ and $k>w$\textup{;}
%\item \label{item:U1} $Z_w:=\{n \in \mathbf{N}: a_{n,k}=0 \text{ for all }k\ge w\} \in \mathcal{I}$ for all $w \in \mathbf{N}$\textup{;}
%\item \label{item:U2} $\mathcal{I}\text{-}\lim_{n}r_n=\infty$, where 
%\begin{displaymath}
%\forall n \in \mathbf{N}, \quad r_n:=
%\begin{cases}
%\,0\,\,\,\,\,\,\,\,&\text{if }n \in Z_1,\\
%\,\max\{k \in \mathbf{N}: a_{n,k}\neq 0\}\,\,& \text{otherwise}.
%\end{cases}
%\end{displaymath}
%\end{enumerate}

Then 
$
\textstyle 
\{\sigma \in \Sigma: A\sigma(x) \in \ell_\infty(\mathcal{I})\}
$  
is meager. 
\end{thm}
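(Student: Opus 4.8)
The plan is to reduce the statement to the Talagrand combinatorial characterization of meager ideals, via condition \ref{item:m2} of Theorem \ref{prop:talagrand}, together with the unboundedness of $x$ and the dispersion hypothesis \eqref{eqitem:U1}. First I would unpack what it means for a subsequence to lie in $\ell_\infty(\mathcal{I})$: $A\sigma(x) \in \ell_\infty(\mathcal{I})$ iff there exists $k \in \mathbf{N}$ with $\{n \in \mathbf{N}: |\sum_k a_{n,k}x_{\sigma(k)}| \ge k\} \in \mathcal{I}$, so the claimed set $E$ decomposes as a countable union $E = \bigcup_{m} E_m$, where $E_m := \{\sigma \in \Sigma: \{n: |(A\sigma(x))_n| \ge m\} \in \mathcal{I}\}$. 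It therefore suffices to show that each $E_m$ is nowhere dense (they need not be closed here, unlike in Theorem \ref{lem:domaindefinition}, so I would instead show each $E_m$ has empty interior and, more carefully, that its closure does — or directly that $\Sigma \setminus \overline{E_m}$ is dense open, which is what I will actually argue).

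The heart of the argument is a "blocking" construction exploiting that $\mathcal{I}$ is meager. By Theorem \ref{prop:talagrand}\ref{item:m2}, fix $\tau \in \Sigma$ such that any set containing $\mathbf{N} \cap [\tau(p), \tau(p+1))$ for infinitely many $p$ is $\mathcal{I}$-positive. Now suppose, for contradiction, that some $E_{m_0}$ is somewhere dense, so it meets every basic open set inside some basic open set $U$ determined by a finite initial condition $\sigma(1) = t_1 < \cdots < \sigma(j) = t_j$. The strategy is to build a single $\sigma_0 \in U$ — or rather a small perturbation argument showing $\overline{E_{m_0}}$ fills $U$ leads to contradiction — by choosing, inside infinitely many of the Talagrand blocks $[\tau(p),\tau(p+1))$, indices $n$ for which $(A\sigma_0(x))_n$ is forced to be large in absolute value. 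Here both hypotheses enter: row finiteness says each row $(A\sigma(x))_n$ depends on only finitely many coordinates of $\sigma$, so I can stage the construction; condition \eqref{eqitem:U1} guarantees that for any width $w$, only $\mathcal{I}$-many rows $n$ fail to see coordinates $\ge w$, so cofinally (off an $\mathcal{I}$-set) there are rows depending on arbitrarily late coordinates of $\sigma$, where I have freedom to place $\sigma_0$ at an index $t$ of $x$ with $|x_t|$ enormous — precisely as in \eqref{eq:contradictionabsolutevalues} — making $|(A\sigma_0(x))_n| \ge m_0 + 1$ for that $n$. Doing this for $n$ ranging over a full Talagrand block, for infinitely many blocks, I would produce $\sigma_0$ whose "large set" $\{n: |(A\sigma_0(x))_n| \ge m_0\}$ contains infinitely many full blocks $[\tau(p), \tau(p+1))$, hence is $\mathcal{I}$-positive by choice of $\tau$, so $\sigma_0 \notin E_{m_0}$; arranging $\sigma_0$ to also respect the finite initial condition and to be a genuine witness that $E_{m_0} \cap U$ is not dense in $U$ gives the contradiction.

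Some care is needed to turn "$E_{m_0}$ somewhere dense" into an actual $\sigma_0$: the clean way is to show directly that for every basic open $U$ there is a basic open $V \subseteq U$ disjoint from $E_{m_0}$. Fix the initial segment defining $U$; using row finiteness, find a row index $n_1$ that, after fixing the initial segment, still depends on a free coordinate position $i_1 > j$; by \eqref{eqitem:U1} such rows exist with arbitrarily large dependence (if only $\mathcal{I}$-many rows saw beyond some width, and $\mathcal{I}$ is proper, there are still rows beyond it). Since $x \notin \ell_\infty$, extend the initial segment so that $\sigma$ places coordinate $i_1$ at an index $t$ with $|x_t|$ large enough to force $|(A\sigma(x))_{n_1}| \ge m_0 + 1$ regardless of the remaining (already fixed) entries — exactly estimate \eqref{eq:contradictionabsolutevalues}. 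Repeat to capture a whole Talagrand block $[\tau(p),\tau(p+1))$ worth of row indices, then jump to a later block and repeat, diagonalizing so that the limiting basic-clopen condition $V$ forces $\{n: |(A\sigma(x))_n|\ge m_0\} \supseteq \mathbf{N}\cap[\tau(p),\tau(p+1))$ for infinitely many $p$; by Talagrand this forces membership in the $\mathcal{I}$-positive sets, i.e. $V \cap E_{m_0} = \emptyset$. The main obstacle I anticipate is bookkeeping the interleaving: at each step fixing finitely many new values of $\sigma$ may "use up" coordinate positions that a later target row $n$ needed, so I must choose the target rows $n$ (inside a given block) in an order compatible with the growth of the fixed initial segment, invoking row finiteness and \eqref{eqitem:U1} repeatedly to always find a target row whose essential coordinates lie beyond the currently-fixed segment. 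Once that combinatorial scheduling is set up, each $E_m$ is nowhere dense and $E = \bigcup_m E_m$ is meager.
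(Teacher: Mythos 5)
There is a genuine gap in the global architecture of your argument: the sets $E_m=\{\sigma\in\Sigma:\{n:|(A\sigma(x))_n|\ge m\}\in\mathcal{I}\}$ are in general \emph{not} nowhere dense, so your plan of producing, inside every basic open $U$, a basic open $V\subseteq U$ with $V\cap E_{m_0}=\emptyset$ cannot succeed. Take $\mathcal{I}=\mathrm{Fin}$, $A$ the identity matrix (row finite, satisfying \eqref{eqitem:U1}) and $x=(0,1,0,2,0,3,\dots)$: every basic open set contains some $\sigma$ that eventually selects only the zero entries of $x$, and such a $\sigma$ lies in $E_1$; hence $E_1$ is dense while still being meager. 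The structural reason is that a basic open $V$ fixes only finitely many coordinates of $\sigma$, say the first $j$, so the rows whose values it determines are exactly those in $Z_{j+1}$, which is an $\mathcal{I}$-set by \eqref{eqitem:U1}; the remaining rows still depend on the free coordinates and, as the example shows, can often be driven below $m_0$. Relatedly, your ``limiting basic-clopen condition $V$'' obtained by diagonalizing over infinitely many Talagrand blocks fixes infinitely many coordinates and is therefore not an open set: the construction yields a single point $\sigma_0\notin E_{m_0}$, which shows $E_{m_0}\neq\Sigma$ but gives no category conclusion.

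The paper avoids this by covering each $E_m$ with countably many \emph{closed} sets $S_w(m,p)$, $p\in\mathbf{N}$, membership in which requires, for \emph{every} block index $q\ge p$, some row of the $q$-th block with value below $m$; then a single \emph{finite} extension making all rows of one block $I_{w_0,q_0}$ large already expels $\sigma_0$ from $S_{w_0}(m_0,p_0)$, so no infinite diagonalization is needed. Two further points you would have to incorporate even to control one block: (i) the block partition must come from Talagrand's characterization applied to the restricted ideal $\mathcal{J}_w$ on $T_w=\mathbf{N}\setminus Z_w$ (this is where condition \ref{item:m3} of Theorem \ref{prop:talagrand} is used, to transfer meagerness to the restriction), not to $\mathcal{I}$ on all of $\mathbf{N}$ --- otherwise a block may contain rows from $Z_w$, e.g.\ identically zero rows, whose values can never be made large; and (ii) the width $w$ must be chosen \emph{after} the basic open set $U$, so that every row outside $Z_w$ has a nonzero entry beyond the fixed initial segment. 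Your local step --- extending the finite condition so that a chosen row picks up a huge $|x_t|$ against a nonzero entry, in the spirit of \eqref{eq:contradictionabsolutevalues} --- is correct and matches the paper's estimate, but the surrounding decomposition must be reworked as above before that step can be applied.
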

\begin{proof} 
%First, note that 
%$\mathcal{I}\text{-}\lim_{n}r_n=\infty$, where 
%\begin{displaymath}
%\forall n \in \mathbf{N}, \quad r_n:=
%\begin{cases}
%\,0\,\,\,\,\,\,\,\,&\text{if }n \in Z_1,\\
%\,\max\{k \in \mathbf{N}: a_{n,k}\neq 0\}\,\,& \text{otherwise}.
%\end{cases}
%\end{displaymath}
Thanks to Theorem \ref{prop:talagrand}, there exists a sequence $(F_n)$ of closed sets in $\mathcal{P}(\mathbf{N})$ such that $\mathcal{I}\subseteq \bigcup_n F_n$ and $F_n \cap \mathrm{Fin}^\star=\emptyset$ for all $n\in \mathbf{N}$. 
Fix $w \in \mathbf{N}$. 
Since $Z_w\in\mathcal{I}$ by condition \eqref{eqitem:U1}, it follows that the family $\mathcal{J}_w:=\{U\setminus Z: U\in \mathcal{I}\}$ is an ideal on $T_w:=\mathbf{N}\setminus Z_w$ such that $\mathcal{J}_w\subseteq \bigcup_n G_n$ and $G_n \cap \mathrm{Fin}^\star=\emptyset$ for all $n \in \mathbf{N}$, where $G_n:=F_n \cap \mathcal{P}(T_w)$ is closed in $\mathcal{P}(T_w)$. 
It follows, again by Theorem \ref{prop:talagrand}, that $\mathcal{J}_w$ is a meager ideal on $T_w$. 
Hence there exists a partition $\{I_{w,1}, I_{w_2},\ldots\}$ of $T_w$ into nonempty finite subsets such that a set $U\subseteq T_w$ does not belong to $\mathcal{J}_w$ whenever $I_{w,n} \subseteq U$ for infinitely many $n$. %, where $I_{n}:=[\iota_n, \iota_{n+1}) \cap T$ for all $n$.  

At this point, let $S$ be the claimed set. Since $T_w \in \mathcal{I}^\star$ for each $w \in \mathbf{N}$, we obtain that
%Since $T\in \mathcal{I}^\star$, we obtain
\begin{displaymath}
\begin{split}
\textstyle 
\forall w \in \mathbf{N}, \quad 
S&\textstyle =\bigcup_m\{\sigma \in \Sigma: \{n \in \mathbf{N}: |\sum_ka_{n,k}x_{\sigma(k)}|\ge m\}\in \mathcal{I}\}\\
&\textstyle 
%\text{ }\hspace{7mm} 
=\bigcup_m\{\sigma \in \Sigma: \{n \in T_w: |\sum_{k} a_{n,k}x_{\sigma(k)}|\ge m\}\in \mathcal{J}_w\}\\
&\textstyle 
%\text{ }\hspace{14mm}
\subseteq \bigcup_m\{\sigma \in \Sigma: \{n \in I_{w,q}: |\sum_k a_{n,k}x_{\sigma(k)}|\ge m\}\neq I_{w,q} \text{ for all large }q\}\\
&\textstyle 
%\text{ }\hspace{21mm}
= \bigcup_m\bigcup_p \bigcap_{q\ge p}\{\sigma \in \Sigma: \{n \in I_{w,q}: |\sum_ka_{n,k}x_{\sigma(k)}|\ge m\}\neq I_{w,q}\}\\
&\textstyle 
%\text{ }\hspace{28mm}
= \bigcup_m\bigcup_p \bigcap_{q\ge p} \bigcup_{J\subsetneq I_{w,q}} \{\sigma \in \Sigma: \{n \in I_{w,q}: |\sum_ka_{n,k}x_{\sigma(k)}|\ge m\}=J\}.
\end{split}
\end{displaymath}
%where
%$$
%\textstyle 
%S(m,q,J):=\{\sigma \in \Sigma: \{n \in I_q: |\sum_ka_{n,k}x_{\sigma(k)}|\ge m\}=J\}.
%$$
%for all $w \in \mathbf{N}$. 
Note that all the inner sets are closed, so that also each
$$
\textstyle 
S_w(m,p):=\bigcap_{q\ge p} \{\sigma \in \Sigma: \forall q\ge p, \exists n \in I_{w,q}, |\sum_ka_{n,k}x_{\sigma(k)}|< m\}
$$
is closed. Since $S\subseteq \bigcup_{m,p}S_w(m,p)$, it is sufficient to show that there exists $w \in \mathbf{N}$ such that each $S_w(m,p)$ has empty interior: this would imply that $S$ is contained in a countable union of nowhere dense sets. 

To this aim, suppose for the sake of contradiction that there exist $m_0,p_0 \in \mathbf{N}$ and positive integers $t_{1}<\cdots<t_{j_0}$ such that $\sigma \in S(m_0,p_0)$ whenever $\sigma(s)=t_{s}$ for all $s=1,\ldots,j_0$. 
Set $w_0:=j_0+1$. 
By condition \eqref{eqitem:U1}, the set $T_{w_0}$ belongs to $\mathcal{I}^\star$, hence it is nonempty. 
Note that, by construction, $r_n \ge w_0$ for all $n\in T_{w_0}$. 
Define $n_0:=\min T_{w_0}$, let $p_1$ be the positive integer for which $n_0 \in I_{w_0,p_1}$, and finally set
%$$
%\textstyle
%K:=\{k \in \mathbf{N}, \exists n \in I_{q_0}, a_{n,k} \neq 0\}, \,\,
%\text{ where }q_0:=\max\{p_0, p_1+1\}.
%$$
%Since $A$ is row finite, then $K$ is necessarily a finite set. Let $k_1<\cdots<k_r$ be the elements of $K$ 
$$
\textstyle
\alpha:=\min\{|a_{n,k}|: a_{n,k} \neq 0 \text{ and }n \in I_{w_0,q_0}\}, \,\,
\text{ where }q_0:=\max\{p_0, p_1+1\}.
$$
Note that $\alpha$ is well defined since $A$ is row finite, and set $k_0:=\max\{r_n: n \in I_{w_0,q_0}\}$. 

%$k_0:=\max\{k \in \mathbf{N}: a_{n,k} \neq 0 \text{ for some }n \in I_{q_0}\}$ is well defined. 

Finally, define $\sigma_0: \mathbf{N}\to \mathbf{N}$ recursively as follows:
\begin{enumerate}[label=\textup{(}\roman*\textup{)}]
\item \label{item:cond1} $\sigma_0(s)=t_s$ for all $s=1,\ldots,j_0$;
\item \label{item:cond2} for each $s=j_0+1,\ldots,k_0$, if $\sigma_0(1)<\cdots<\sigma_0(s-1)$ are already defined, then $\sigma_0(s)$ is an integer $h>\sigma_0(s-1)$ such that
$$
\textstyle 
|x_h|\ge \frac{1}{\alpha}\left(m_0+\max\left\{\left|\sum_{k< s}a_{n,k}x_{\sigma_0(k)}\right|: n \in I_{w_0,q_0}\right\}\right).
$$
\item \label{item:cond3} $\sigma_0(k_0+s)=\sigma_0(k_0)+s$ for all $s \in \mathbf{N}$.
%(note that this is possible since $x$ is unbounded).
\end{enumerate}
Considering that $\sigma_0$ is strictly increasing by construction, it follows by \ref{item:cond1} that $\sigma_0 \in S_{w_0}(m_0,p_0)$. 
At the same time, since $j_0<r_n\le k_0$ for all $n \in I_{w_0,q_0}$, we obtain by \ref{item:cond2} that
\begin{displaymath}
\begin{split}
\textstyle 
\forall n\in I_{w_0,q_0}, \quad |\sum_ka_{n,k}x_{\sigma_0(k)}|
&\textstyle =|\sum_{k\le r_n}a_{n,k}x_{\sigma_0(k)}|\\
&\textstyle \ge |a_{n,r_n} x_{\sigma_0(r_n)}|-\left|\sum_{k< r_n}a_{n,k}x_{\sigma_0(k)}\right|\\
&\textstyle \ge \alpha |x_{\sigma_0(r_n)}|- \max\left\{\left|\sum_{k< r_n}a_{i,k}x_{\sigma_0(k)}\right|: i \in I_{w_0,q_0}\right\}\ge m_0.
\end{split}
\end{displaymath}
This implies that $\sigma_0 \notin S_{w_0}(m_0,p_0)$, hence we obtained the desired contradiction. 
\end{proof}

\section{Proof of Theorem \ref{thm:mainTauberianFinI}}\label{sec:mainproof}

\begin{proof}[Proof of Theorem \ref{thm:mainTauberianFinI}] 
First, suppose that $x$ is a bounded divergent sequence. Then $\Sigma_{x,A}(\mathcal{I})$ is meager by Theorem \ref{thm:laflamme} and Theorem \ref{thm:conclusionmaincategorybounded}. 

Secondly, suppose that $x$ is unbounded. If $A$ is not row finite, then $\Sigma_{x,A}(\mathcal{I})\subseteq \{\sigma \in \Sigma: \sigma(x) \in \omega_A\}$, which is meager by Corollary \ref{cor:unboundedcase1}. 
Otherwise, suppose hereafter that $A$ is a row finite $(\mathrm{Fin}, \mathcal{I})$-regular matrix. 
Note that $\mathcal{I}$ is a Borel ideal, hence it is meager. 
Thanks to Theorem \ref{thm:connotleofinJ}, $A$ satisfies conditions \ref{item:R1}--\ref{item:R3}. 
For each $w \in \mathbf{N}$, it follows by \ref{item:R2} and \ref{item:R3} that $\mathcal{I}\text{-}\lim_n\sum_{k\ge w}a_{n,k}=1$. 
In particular, $Z_w$ is contained in $\{n \in \mathbf{N}: \sum_{k\ge w}a_{n,k}=0\}$, which belongs to $\mathcal{I}$. 
This implies that condition \eqref{eqitem:U1} in Theorem \ref{thm:unboundedrowfinite} holds. 
It follows that $\Sigma_{x,A}(\mathcal{I})$ is contained in $\{\sigma \in \Sigma: A\sigma(x) \in \ell_\infty(\mathcal{I})\}$, which is meager by Theorem \ref{thm:unboundedrowfinite}. 
\end{proof}

\section{Concluding remarks}

It doesn't come as a surprise that, under suitable hypotheses on the matrix $A$ and the ideal $\mathcal{I}$, the set $\Sigma_{x,A}(\mathcal{I})$ is either meager or the whole $\Sigma$. Indeed, by a known topological $0$-$1$ law, see e.g. \cite[Theorem 8.47]{MR1321597}, a tail subset of $\Sigma$ with the Baire property is either meager or comeager. This applies also in our case. For, let $\mathcal{I}$ be a Borel ideal and $A$ be a $(\mathrm{Fin}, \mathcal{I})$-regular matrix. Then, it follows by Remark \ref{rmk:borelmeasurability} that $\Sigma_{x,A}(\mathcal{I})$ is Borel, hence it has the Baire property. Moreover, if $\sigma_1(n)=\sigma_2(n)$ for all but finitely many $n$, then $y\in c_0$ where $y_n:=x_{\sigma_2(n)}-x_{\sigma_1(n)}$ for all $n$. Hence, if $\sigma_1 \in \Sigma_{x,A}(\mathcal{I})$ then 
$$
\textstyle 
\mathcal{I}\text{-}\lim A\sigma_2(x)=\mathcal{I}\text{-}\lim A\sigma_1(x)+\mathcal{I}\text{-}\lim Ay=\mathcal{I}\text{-}\lim A\sigma_1(x).
$$
This implies that also $\sigma_2 \in \Sigma_{x,A}(\mathcal{I})$, proving that $\Sigma_{x,A}(\mathcal{I})$ is also a tail set. 

In the same direction of \cite{MR404918, MR382890, MR425410}, we leave as open question for the interest reader to check whether the analogues of Theorem \ref{thm:mainTauberianFinI} hold for permutations and strechings of the sequence $x$.

Finally, our main result seems to be related also to a conjecture of DeVos \cite{MR501124} which can be reformulated as follows: if $E$ is an FK-space (that is, a locally convex vector space of $\omega$ which is also Fr\'{e}chet and with continuous coordinates) containing $c_{00}$, then $\{0,1\}^{\mathbf{N}}\subseteq E$ if and only if $E \cap \{0,1\}^{\mathbf{N}}$ is not meager. However, it seems quite unlikely that spaces of the type $\{x \in \omega: Ax \in c(\mathcal{I})\}$ may provide a counterexample to the latter conjecture. Indeed,  it has been shown by Connor in \cite[Theorem 3.3]{MR954458} that, even for the well-behaved ideal $\mathcal{I}=\mathcal{Z}$, the unique FK-space containing $c(\mathcal{Z})$ is $\omega$.

%\subsection{Acknowledgements.} 

%\clearpage

%%%%%%%%%%%%%%%%%%%%%%%%%%%%%%%%%%%%%%%%
%\nocite{*}
\bibliographystyle{amsplain}
%\bibliography{matrix}

\end{document}